\newcommand{\Z}{\mathbb{Z}}
\newcommand{\set}[2]{\{#1;#2\}}
\newcommand{\lazar}{\ \&\ }
\newcommand{\N}{\mathbb{N}}
\newcommand{\divchain}[5]{\left(\begin{array}{c|#3}#1 & #4\\#2 & #5\end{array}\right)}
\newcommand{\divchainab}[3]{\divchain{a}{b}{#1}{#2}{#3}}
\newcommand{\lc}{\mathrm{lc}}
\newcommand{\udash}{\underline{\ \ \!}\,}
\newcommand{\chge}[1]{\textcolor{red}{#1}}
\renewcommand{\chge}[1]{#1}
\theoremstyle{plain}
\newtheorem{thm}{Theorem}[section]
\newtheorem{prop}[thm]{Proposition}
\newtheorem{lem}[thm]{Lemma}
\newtheorem{cor}[thm]{Corollary}
\theoremstyle{definition}
\theoremstyle{remark}
\newtheorem*{rem}{Remark}
\begin{document}
\title{Quasi-Euclidean subrings of $\mathbb Q[x]$}

\author{\textsc{Petr~Glivick\' y}}
\address{\chge{\textsc{Petr~Glivick\' y}:} Charles University, Faculty of Mathematics and Physics, Department of Theoretical Computer Science and Mathematical Logic \\ 
Malostransk\'e n\'am\v est\'\i\ 12, 118 00 Praha~1, Czech Republic}
\email{petrglivicky@gmail.com}

\author{\textsc{Jan~\v Saroch}}
\address{\chge{\textsc{Jan~\v Saroch}:} Charles University, Faculty of Mathematics and Physics, Department of Algebra \\ 
Sokolovsk\'{a} 83, 186 75 Praha~8, Czech Republic}
\email{saroch@karlin.mff.cuni.cz}

\keywords{$k$-stage division chain, quasi-Euclidean domain, PID}

\thanks{First author supported by the grant \chge{GAUK 4372/2011}}
\thanks{Second author supported by the grant E\v CC 301-29/248001.}

\subjclass[2010]{13F07 (primary), 13F10, 13F20, 03H15 (secondary)}
\date{\today}

\begin{abstract} Using a nonstandard model of Peano arithmetic, we show that there are quasi-Euclidean subrings of $\mathbb Q[x]$ which are not $k$-stage Euclidean for any norm and positive integer $k$. These subrings can be either PID or non-UFD, depending on the choice of parameters in our construction. In both cases, there are $2^\omega$ such domains up to ring isomorphism.
\end{abstract}

\maketitle
\vspace{4ex}


Although Euclidean and principal ideal domains have been intensively studied for almost a century, examples of non-Euclidean PIDs are still rather scattered throughout the literature, and thought of as more or less singular, non-frequent objects. The oldest of these examples are arguably the rings of integers of $\mathbb Q(\sqrt d)$ for $d = -19, -43, -67, -163$. However, these are the only cases for negative $d$'s, and the results from \cite{W} and \cite{H} indicate that it is almost surely the case of positive values of $d$, too.

Another type of examples was given by Samuel in his famous paper \cite{S}. Leutbecher (in \cite{L}) capitalized on his approach several years later, and proved that there are non-Euclidean PIDs which are even quasi-Euclidean (this was not the case of the four rings of integers mentioned above, as Cohn observed in \cite{Coh}).

Throughout this paper, by a \emph{quasi-Euclidean domain}, we mean a commutative domain $R$ for which there is a function $\phi :R^2\to\omega$ such that, for all $(a,b)\in R^2$ with $b\neq 0$, there exists $q\in R$ with $\phi (b, a-bq) < \phi(a,b)$. The definition is similar to the one of classical Euclidean norm, with the important difference that by the norm function here, we do not measure elements of the ring but pairs of those. Also, unlike in the case of Euclidean domains, $\omega$ can be equivalently replaced by some/any infinite ordinal in the definition; see Preliminaries section (in particular Proposition~\ref{prop:equiv_def}) for this and further equivalent definitions of quasi-Euclidean domain, and related concepts.

There are a few more published results on non-Euclidean PIDs. Unfortunately, they do not usually present a coherent class of these domains, or some sort of characterization of rings which are non-Euclidean PIDs in some distinguished class of domains. Nice attempts in this direction can be found in \cite{A} and \cite{EH}.

In this paper, we present a parametric construction which is in some sense a generalization of the approach used in \cite{EH}. While studying certain models of Peano arithmetic, we noticed that there are many discretely ordered non-Euclidean (even non-$k$-stage Euclidean in the sense of Cooke \cite{Coo}) subrings of $\mathbb Q[x]$ which are quasi-Euclidean. In fact, for each $\tau\in\prod _{p\in\mathbb P}\mathbb J_p$, where $\mathbb J_p$ denotes the ring of $p$-adic integers, we define one such subring. Moreover, we observe that the set $\prod _{p\in\mathbb P}\mathbb J_p$ splits into two parts of full cardinalities, depending on whether the resulting ring is PID or non-UFD. Since each quasi-Euclidean ring is B\' ezout (Proposition~\ref{prop:equiv_def}), there are no inbetween cases, i.e.\ non-PID and UFD at the same time.


\subsection*{Acknowledgements}

The authors would like to thank Josef Ml\v cek and Jan Trlifaj for reading parts of this text and giving several valuable comments.


\section{Preliminaries}
\label{sec:prelim}

Throughout this paper, all rings are (commutative integral) domains. Further, we denote by $\mathbb P$ the set of all primes in $\mathbb N$. For each $p\in\mathbb P$, $\mathbb J_p$ stands for the ring of $p$-adic integers, while $\mathbb Z_p$ denotes the field $\mathbb Z/p\mathbb Z$. Since $\mathbb J_p\cong\varprojlim\mathbb Z_{p^k}$, we shall view $\mathbb J_p$ as a subring of $\prod _{k=1}^\infty\mathbb Z_{p^k}$, and denote, for a positive integer $k$, by $\pi _k$ the canonical projection from $\mathbb J_p$ to $\mathbb Z_{p^k}$. It will not cause any confusion that the notation $\pi _k$ does not reflect the prime $p$. Moreover, for technical reasons, we put $\pi_0: \mathbb J_p \to \{0\}$; again, regardless of the prime $p$.

If we deal with elements from the ring $\mathbb Q[x]$, we define $\deg 0 = -1$, and we denote by lc$(q)$ the leading coefficient of a polynomial $q$.


\subsection{Quasi-Euclidean and $k$-stage Euclidean domains}

Various generalizations of the concept of a Euclidean domain were proposed and studied in the past. The one we find very natural, is the concept of quasi-Euclidean (used in \cite{L} and \cite{B}) or the equivalent notion of $\omega$-stage Euclidean domain \chge{(used by Cooke in \cite{Coo})}.

Given a ring $R$ and a partial order $\leq$ on $R^2$, we say that $\leq$ is \emph{quasi-Euclidean} if it has the descending chain condition (dcc), and for any pair $(a, b)\in R^2$ with $b\neq 0$, there exists an element $q$ in $R$ such that $(b,a-bq)<(a,b)$. We call $R$ \emph{quasi-Euclidean} provided there exists a quasi-Euclidean partial order on $R^2$.

Let $(a, b)\in R^2$ and $k$ be a \chge{non-negative} integer. A \emph{$k$-stage division chain} starting from the pair $(a, b)$ is a sequence of equations in $R$
\begin{align*} a &= q_1b + r_1 \\
b &= q_2r_1 + r_2 \\
r_1 &= q_3r_2 + r_3 \\
&\qquad\vdots\\
r_{k-2} &= q_kr_{k-1} + r_k.
\end{align*}
Such a division chain is called \emph{terminating} if the last remainder $r_k$ is $0$ ($r_{k-1}$ is then easily seen to be the GCD of $a$ and $b$). Notice that a $k$-stage division chain is determined by its starting pair and the sequence of quotients $q_1, \dotsc , q_k$. For the sake of compactness, in what follows, we shall denote this chain also by 
\begin{equation*}\label{divchaindef}\divchainab{ccc}{q_1 & \ldots & q_k}{r_1 & \dots & r_k}.\end{equation*}
\noindent Given such a division chain, we define its $0$-th remainder $r_0$ as $b$.

In the following proposition, $On$ denotes the class of all ordinal numbers.

\begin{prop} \label{prop:equiv_def} {\rm (\cite{B}, \cite{Coo}, \cite{L})} For a commutative domain $R$, the following conditions are equivalent:
\begin{enumerate}
\item There exists a function $\phi :R^2\to On$ (with Rng$(\phi)\subseteq\omega$) such that, for all $(a,b)\in R^2$ with $b\neq 0$, there exists $q\in R$ such that $\phi (b, a-bq) < \phi(a,b)$.
\item $R$ is quasi-Euclidean.
\item $R$ is a B\' ezout domain, and the group $\hbox{\rm GL}_2(R)$ of regular $2\times 2$ matrices over $R$ is generated by matrices of elementary transformations.
\item Every pair $(a, b)\in R^2$ with $b\neq 0$ has a terminating $k$-stage division chain for some positive integer $k$.
\end{enumerate}
\end{prop}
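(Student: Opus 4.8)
The plan is to establish the cycle $(1)\Rightarrow(2)\Rightarrow(4)\Rightarrow(1)$ and then to add $(4)\Rightarrow(3)$ and $(3)\Rightarrow(4)$; the iteration used in $(2)\Rightarrow(4)$ will simultaneously show that any domain satisfying these conditions is B\'ezout. For $(1)\Rightarrow(2)$, given $\phi\colon R^2\to On$ one declares $(a',b')\le(a,b)$ to hold exactly when $(a',b')=(a,b)$ or $\phi(a',b')<\phi(a,b)$: this is a partial order (antisymmetry and transitivity come from the linear order on $On$), it has dcc because a strictly descending chain would project to one of ordinals, and the defining property of a quasi-Euclidean order is precisely the hypothesis on $\phi$. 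For $(2)\Rightarrow(4)$, fix $(a,b)$ with $b\neq0$; choosing quotients $q_1,q_2,\dots$ so that $(r_0,r_1)<(a,b)$, $(r_1,r_2)<(r_0,r_1)$, and so on (with $r_0=b$, $r_1=a-q_1b$, $r_{i+1}=r_{i-1}-q_{i+1}r_i$) produces a strictly descending chain in the order, which by dcc must stop; it can stop only when the current second coordinate is $0$, i.e.\ at a terminating division chain. Since $(b,a-qb)=(a,b)$ as ideals, the same iteration shows $(a,b)=(r_{k-1})$, so $R$ is B\'ezout. Finally, for $(4)\Rightarrow(1)$, put $\phi(a,b)=0$ if $b=0$ and otherwise let $\phi(a,b)$ be the least length of a terminating division chain from $(a,b)$ (a positive integer, by (4)); if such a shortest chain has first quotient $q_1$, its remaining quotients form a terminating chain from $(b,a-q_1b)$ of length $\phi(a,b)-1$, so $\phi(b,a-q_1b)<\phi(a,b)$, and $\operatorname{Rng}(\phi)\subseteq\omega$ by construction.

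For $(4)\Rightarrow(3)$, B\'ezoutness is already available, so it remains to generate $\operatorname{GL}_2(R)$ by elementary matrices. Write $E(q)=\left(\begin{smallmatrix}q&1\\1&0\end{smallmatrix}\right)$; since $E(q)=\left(\begin{smallmatrix}1&q\\0&1\end{smallmatrix}\right)\left(\begin{smallmatrix}0&1\\1&0\end{smallmatrix}\right)$ it is a product of elementary matrices, and one step $x=qy+r$ of a division chain reads exactly $\binom{x}{y}=E(q)\binom{y}{r}$; hence a terminating $k$-stage chain for $(a,b)$ yields $\binom{a}{b}=E(q_1)\cdots E(q_k)\binom{r_{k-1}}{0}$. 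Let $M\in\operatorname{GL}_2(R)$ have first column $\binom{a}{b}$. If $b=0$, then $\det M$ forces the diagonal entries of the triangular matrix $M$ to be units, so $M$ is visibly a product of elementary matrices. If $b\neq0$, then $r_{k-1}$ generates the ideal $(a,b)$, which contains $\det M$, so $r_{k-1}$ is a unit; choosing a unit $s$ with $(-1)^{k}r_{k-1}s=\det M$ and setting $M''=E(q_1)\cdots E(q_k)\operatorname{diag}(r_{k-1},s)$ gives a product of elementary matrices with the same first column and the same determinant as $M$, whence $(M'')^{-1}M$ has first column $\binom{1}{0}$ and determinant $1$, i.e.\ equals $\left(\begin{smallmatrix}1&\ast\\0&1\end{smallmatrix}\right)$; thus $M$ is a product of elementary matrices.

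For $(3)\Rightarrow(4)$, take $(a,b)$ with $b\neq0$ and use B\'ezoutness to write $a=da'$, $b=db'$ with $d=\gcd(a,b)$, together with $u,v\in R$ such that $ua'+vb'=1$ (so $\gcd(a',b')=1$). The matrix $N=\left(\begin{smallmatrix}a'&-v\\b'&u\end{smallmatrix}\right)$ has determinant $1$, so by (3) it is a product of elementary matrices. Rewrite this product in the form $E(c_1)\cdots E(c_j)D$ with $D$ diagonal, using $\left(\begin{smallmatrix}0&1\\1&0\end{smallmatrix}\right)=E(0)$, $\left(\begin{smallmatrix}1&x\\0&1\end{smallmatrix}\right)=E(x)E(0)$, $\left(\begin{smallmatrix}1&0\\x&1\end{smallmatrix}\right)=E(0)E(x)$, and the fact that a diagonal factor can be shuttled to the right of any $E(\cdot)$ via $\operatorname{diag}(\alpha,\beta)E(y)=E(\beta^{-1}\alpha y)\operatorname{diag}(\beta,\alpha)$ — so all diagonal factors migrate to the right and collapse into a single $D$. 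Comparing first columns gives $\binom{a'}{b'}=E(c_1)\cdots E(c_j)\binom{\alpha}{0}$ with $\alpha=D_{11}$; unwinding this factorization exactly as the display above was built up exhibits a terminating $j$-stage division chain for $(a',b')$ with quotients $c_1,\dots,c_j$, and multiplying each of its equations through by $d$ yields a terminating division chain for $(a,b)$.

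The verifications of the matrix identities and of the determinant bookkeeping in $(4)\Rightarrow(3)$ are routine; I expect the only genuinely delicate point to be the normal-form rewriting of an arbitrary word in elementary matrices into the shape $E(c_1)\cdots E(c_j)D$. This is precisely where the restriction to $2\times2$ matrices is used, and it is essentially Cohn's analysis of the group $GE_2(R)$; it is elementary but must be organized carefully, e.g.\ by induction on the length of the word, moving the rightmost diagonal factor fully to the right, then the next one, and so on.
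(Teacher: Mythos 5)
Your proof is correct, and on the easy implications it coincides with the paper's argument: $(1)\Rightarrow(2)$ by turning $\phi$ into a partial order, $(2)\Rightarrow(4)$ by the dcc, and $(4)\Rightarrow(1)$ via the minimal length of a terminating division chain. The genuine difference is the treatment of condition $(3)$: the paper simply cites \cite[14.3]{O} for the equivalence of $(3)$ and $(4)$, whereas you prove it from scratch using the matrices $E(q)=\bigl(\begin{smallmatrix} q&1\\ 1&0\end{smallmatrix}\bigr)$, the dictionary translating one division step $x=qy+r$ into $\binom{x}{y}=E(q)\binom{y}{r}$, and the normal form $E(c_1)\cdots E(c_j)D$ for words in elementary and diagonal matrices; your shuttle identity $\operatorname{diag}(\alpha,\beta)E(y)=E(\beta^{-1}\alpha y)\operatorname{diag}(\beta,\alpha)$ is exactly the standard $GE_2$ relation from Cohn's and Cooke's analysis, and your determinant bookkeeping (including the unit $s$ with $(-1)^k r_{k-1}s=\det M$) checks out. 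You also extract B\'ezoutness directly from the invariance of the ideal $(a,b)$ along a division chain, which the paper gets for free from the citation. What the paper's route buys is brevity; what yours buys is a self-contained proposition and an explicit chain-to-factorization correspondence, at the price of the normal-form rewriting, which you only sketch but whose inductive organization is indeed routine. One point worth making explicit: you read ``matrices of elementary transformations'' as including invertible diagonal matrices and the swap, i.e.\ condition $(3)$ asserts $GL_2(R)=GE_2(R)$; that is the intended reading (with transvections alone the statement would already fail for $\mathbb Z$, where they generate only $SL_2(\mathbb Z)$), and you use it consistently in both directions, so no gap arises.
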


\begin{proof} $(1) \Longrightarrow (2)$ is trivial, we just put $(a, b)<(a^\prime, b^\prime)$ if $\phi(a, b)< \phi(a^\prime, b^\prime)$.

$(2) \Longrightarrow (4)$ follows directly by the dcc.

The equivalence of $(3)$ and $(4)$ was proved already in \cite[14.3]{O}.

$(4)\Longrightarrow (1)$: We put $\phi (a,0) = 0$ for all $a\in R$. If $b \neq 0$, we define $\phi (a, b)$ as the minimal $k\in\omega$ for which the pair $(a, b)$ has a terminating $k$-stage division chain. (So we even manage to find $\phi$ with the range in $\omega$.)
\end{proof}

Notice that no notion of a norm is involved in the definition of a quasi-Euclidean domain. However, given a norm $N$ on $R$ (i.e. a function $N: R \to\mathbb N$ with $N(a) = 0$ iff $a = 0$), we can measure how far $N$ is from being Euclidean: as in \cite{Coo}, \chge{for $0<k\leq \omega$}, we say that $R$ is a \emph{$k$-stage Euclidean domain with respect to $N$} provided that, for every $(a,b)\in R^2$ with $b\neq 0$, there 
\chge{exists a positive integer $l\leq k$ such that for some $l$-stage division chain starting from $(a,b)$ it is $N(r_l)<N(b)$.}
As usual, we say that $R$ is \emph{$k$-stage Euclidean} if there exists such a norm $N$ on $R$. So, in our notation, $1$-stage Euclidean means Euclidean (in the classic sense). \chge{On the other hand, by Proposition \ref{prop:equiv_def}, $R$ is $\omega$-stage Euclidean (with respect to some/any norm) if and only if it is quasi-Euclidean.}

Finally, observe that a quasi-Euclidean domain, being B\' ezout, is UFD if and only if it is PID. An example of non-UFD $2$-stage Euclidean domain was given already by Cooke in \cite{Coo}, at the end of \S 1. It is at this place, where he admits that he does not know of any example of quasi-Euclidean domain which is not $2$-stage Euclidean. Interestingly, all examples, we are going to construct, have got this property.


\subsection{Peano arithmetic and weak saturation}

Although our construction will be purely algebraic, we are going to give also a description derived from a nonstandard model of Peano arithmetic (PA). There are several reasons to do this: the description is very natural, only basic logical tools are needed, and it sheds more light at the entire situation.

Our models of PA are thought of as models in the language $L = (0,1,+,\cdot, \leq)$. The fact that it is an extension of the language of rings will make it more convenient for us to work with. In particular, we can immediately say that any model of PA is a (discretely ordered) commutative semiring with $0$ and $1$.

We will say that $\mathcal M\models$ PA is \emph{weakly saturated} if every $1$-type in $\mathcal M$ without parameters is realized in $\mathcal M$, i.e. given any set $Y = \{\varphi_i (x) \mid i\in I\}$ of $L$-formulas with one free variable $x$, there is $m\in M$ such that $\mathcal M\models\varphi_i[m]$ for all $i\in I$, provided that, for each finite subset $S$ of $I$, one has $\mathcal M\models(\exists x)\bigwedge _{i\in S}\varphi _i(x)$. Indeed, weakly saturated models of PA exist, we can even take an appropriate elementary extension of $\mathbb N$, however, as we shall see, for such a model $\mathcal M$, necessarily $|M|\geq 2^\omega$.


\section{Examples}
\label{sec:ex}


\subsection{Logical description}

Let us fix a weakly saturated model $\mathcal M$. Then, as mentioned above, $\mathcal M$ forms a commutative semiring. Formally adding negative elements, we turn $\mathcal M$ into a commutative domain containing $\mathbb Z$ as a subring. We will denote this domain $\mathcal M^\pm$. Notice that $\mathcal M^\pm$ shares several basic properties with $\mathbb Z$, namely it is a discretely ordered GCD domain; also for every $q,r$ with $r\neq 0$, there exists $0\leq t<|r|$ such that $r$ divides $q+t$ (where \chge{$|\udash|$} is the usual absolute value). However, unlike $\mathbb Z$, $\mathcal M^\pm$ is not Noetherian.

Let $a$ be a nonstandard element of $\mathcal M$, i.e. $a\in M\setminus\mathbb N$. We define a subring $R_a$ of $\mathcal M^\pm$ in the following way:

$$R_a = \{m\in M^\pm \mid (\exists n\in\mathbb N)(\exists h\in\mathbb Z[x])\, n\neq 0 \;\&\; n\cdot m = h(a)\}.$$

It is easily seen that $R_a$ is a ring. It can be naturally approached if we, in the first step, take a subring of $\mathcal M^\pm$ generated by $a$ (which is nothing else than $\mathbb Z[a]\cong\mathbb Z[x]$), and then allow division by nonzero integers in case it is possible in $\mathcal M^\pm$. We immediately observe that $R_a$ is isomorphic to

$$R_a^\prime = \left\{\frac{h}{n}\in\mathbb Q[x]\;\Bigl|\Bigr.\; n\in\mathbb N\setminus\{0\}, h\in\mathbb Z[x]\hbox{, and }n\,|\,h(a)\hbox{ in }\mathcal M^\pm\right\}.$$

\begin{rem} \hfill\par
\begin{enumerate}
\item Regardless of $a$, we have $R_a^\prime \cap \mathbb Q = \mathbb Z$.
\item Notice that $R_a = R_{a+1}$ (for any nonstandard $a\in M$) but $R_a^\prime \neq R_{a+1}^\prime$ since precisely one of these two rings contains $x/2$. On the other hand, as we shall see later, it is possible that we have nonstandard $a,b\in M$ such that $R_a \neq R_b$ but $R_a^\prime = R_b^\prime$.
\item For our considerations, we do not need the full strength of PA. In fact, instead of binary multiplication, it is enough to have an endomorphism $a\cdot$ of the monoid $(M,+,0)$ such that $a\cdot 1\not\in\mathbb N$, and the induction for all formulas in the language $(0,1,+,a\cdot,\leq)$; so the resulting theory can be viewed as an extension of Presburger arithmetic rather than weakening of PA. \chge{In fact, Theorem \ref{thm:quasi} was obtained 
as a part of the first author's 
proof of model-completeness of this theory.}
\end{enumerate}
\end{rem}


\subsection{Algebraic description}

As we have seen above, the definitions of $R_a$ and $R_a^\prime$ rely on the fixed model $\mathcal M$ of PA. However, there is only a little amount of information about $a\in M$ that we actually need. This makes it possible---as we are going to demonstrate---to manage without refering to any Peano model. For $\tau\in\prod_{p\in\mathbb P}\mathbb J_p$, we define a subring $R_\tau$ of $\mathbb Q[x]$.

$$R_\tau = \left\{\frac{h}{n}\in\mathbb Q[x]\;\Bigl|\Bigr.\; n\in\mathbb N\setminus\{0\}, h\in\mathbb Z[x]\hbox{, and }(\forall p\in\mathbb P)\,\pi _{\hbox{v}_p(n)}(h(\tau _p)) = 0\right\}.$$

Here, $\hbox{v}_p$ denotes the usual $p$-valuation. Further, $\tau _p$ is the $p$th projection of $\tau$, and the substitution $h(\tau _p)$ is done inside $\mathbb J_p$ where $\mathbb Z$ is canonically embedded via $z\mapsto (z\!\!\mod p, z\!\!\mod p^2, z\!\!\mod p^3,\dotsc)$. We will use this substitution several times in the next section.

It follows easily from the definition that $\sigma \neq \tau$ implies $R_\sigma \neq R_\tau$. The correspondence between the rings $R_a^\prime$ and $R_\tau$ is made precise by Proposition~\ref{prop:corresp}.

\begin{prop} \label{prop:corresp} Let $\mathcal M$ be a weakly saturated model of PA. Then:
\begin{enumerate}
\item For each nonstandard $a\in M$ there exists precisely one $\tau\in\prod_{p\in\mathbb P}\mathbb J_p$ such that $R_a^\prime = R_\tau$.
\item For each $\tau\in\prod_{p\in\mathbb P}\mathbb J_p$ there is at least one nonstandard $a\in M$ such that $R_a^\prime = R_\tau$.
\end{enumerate}
\end{prop}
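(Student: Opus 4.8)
The plan is to prove both parts by connecting the $p$-adic data $\tau$ to the divisibility behaviour of $a$ in $\mathcal M^\pm$ prime by prime. The key observation is that for a nonstandard $a$ and a fixed prime $p$, the sequence of residues of $a$ modulo $p, p^2, p^3, \dots$ is a coherent system, hence determines an element $\tau_p\in\mathbb J_p$; and conversely, the property ``$n\mid h(a)$ in $\mathcal M^\pm$'' depends, for $n=p^k$, only on whether $h(\tau_p)\equiv 0$ modulo $p^k$, i.e.\ on $\pi_k(h(\tau_p))$. Splitting a general $n$ into its prime-power factors via CRT then shows that the condition $n\mid h(a)$ is equivalent to $\pi_{v_p(n)}(h(\tau_p))=0$ for all $p$, which is exactly the defining condition of $R_\tau$; this yields $R_a^\prime=R_\tau$ for that particular $\tau$, and uniqueness of $\tau$ follows from the remark that $\sigma\neq\tau$ implies $R_\sigma\neq R_\tau$.

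For part~(1), I would argue as follows. Fix a nonstandard $a\in M$ and a prime $p$. For each $k\geq 1$, by the property of $\mathcal M^\pm$ quoted in the text (for $q=a$, $r=p^k$ there is a unique $0\leq t<p^k$ with $p^k\mid a+t$ in $\mathcal M^\pm$), there is a well-defined residue $a\bmod p^k\in\mathbb Z_{p^k}$, namely $-t\bmod p^k$. These residues are compatible under the projections $\mathbb Z_{p^{k+1}}\to\mathbb Z_{p^k}$ because $p^{k+1}\mid a+t$ implies $p^k\mid a+t$ in the discretely ordered GCD domain $\mathcal M^\pm$ (and $p^k\mid p^{k+1}$). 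Hence they define an element $\tau_p:=\varprojlim(a\bmod p^k)\in\mathbb J_p$, and we set $\tau=(\tau_p)_{p\in\mathbb P}$. Now let $h\in\mathbb Z[x]$ and $n\in\mathbb N\setminus\{0\}$. Since $h(a)$ is computed in $\mathcal M^\pm$ and $\mathbb Z\hookrightarrow\mathcal M^\pm$ is a ring embedding, substitution commutes with reduction, so $p^k\mid h(a)$ in $\mathcal M^\pm$ \iff $\pi_k(h(\tau_p))=0$ in $\mathbb Z_{p^k}$; this is where the congruence $a\equiv\tau_p\pmod{p^k}$ and polynomial arithmetic are combined. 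Writing $n=\prod_p p^{v_p(n)}$, the Chinese Remainder Theorem (available in $\mathcal M^\pm$ as a GCD domain in which the relevant ideals are comaximal) gives $n\mid h(a)$ \iff $p^{v_p(n)}\mid h(a)$ for every $p$, hence \iff $\pi_{v_p(n)}(h(\tau_p))=0$ for every $p$. Comparing with the definitions of $R_a^\prime$ and $R_\tau$ yields $R_a^\prime=R_\tau$.

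For part~(2), fix $\tau\in\prod_{p\in\mathbb P}\mathbb J_p$ and use weak saturation of $\mathcal M$. Consider the $1$-type $Y$ over $\emptyset$ consisting, for each $p\in\mathbb P$ and each $k\geq 1$, of the formula asserting $p^k\mid x+t_{p,k}$, where $t_{p,k}$ is the unique element of $\{0,\dots,p^k-1\}$ with $-t_{p,k}\equiv\tau_p\pmod{p^k}$ (these are fixed natural numbers, hence $L$-terms), together with the formulas $x\neq 0,1,2,\dots$ forcing $x$ to be nonstandard. Every finite subset of $Y$ mentions only finitely many primes and finitely many exponents, so it is satisfied in $\mathbb N$ (equivalently, by any sufficiently large natural number of the right residue class, obtained by CRT), hence in $\mathcal M$. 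By weak saturation there is $a\in M$ realising $Y$; it is nonstandard, and by construction its residues modulo each $p^k$ agree with those of $\tau_p$, so the $\tau$ produced for this $a$ by part~(1) is exactly the given $\tau$, whence $R_a^\prime=R_\tau$.

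The main obstacle is the careful verification of the two ``commuting'' facts inside the nonstandard ring $\mathcal M^\pm$: first, that polynomial substitution followed by reduction mod $p^k$ in $\mathcal M^\pm$ matches substitution of the $p$-adic residue $\tau_p$ followed by projection $\pi_k$ in $\mathbb J_p$ (this needs that $\mathbb Z\to\mathcal M^\pm$ is a ring map and that $\mathcal M^\pm$ has well-behaved division-with-remainder by standard integers, so that ``$\bmod\,p^k$'' makes sense and is additive and multiplicative); and second, that the Chinese Remainder Theorem is available in $\mathcal M^\pm$ for coprime standard moduli, reducing divisibility by $n$ to divisibility by its prime-power parts. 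Everything else — coherence of the residue system, realising the type by weak saturation, and the uniqueness of $\tau$ — is then routine given the properties of $\mathcal M^\pm$ recorded in the text and the earlier remark that distinct $\tau$ give distinct $R_\tau$.
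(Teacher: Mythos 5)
Your proposal is correct and follows essentially the same route as the paper: part (1) is the paper's homomorphism $\psi$ sending $a$ to its coherent system of residues mod $p^k$ (you just carry out the ``straightforward verification'' that $R_a^\prime=R_{\psi(a)}$ in detail, via reduction of $h(a)$ mod prime powers and CRT), with uniqueness from the remark that $\sigma\neq\tau$ implies $R_\sigma\neq R_\tau$; part (2) realises, by weak saturation, the same type of congruences prescribed by $\tau$ together with formulas forcing nonstandardness, finitely satisfiable in $\mathbb N$ by CRT.
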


\begin{proof} $(1)$ There is even a ring homomorphism $\psi: \mathcal M^\pm \to \prod_{p\in\mathbb P}\mathbb J_p$ which assigns to $m\in M^\pm$ an element $\tau$ such that $\tau _p = (m\!\!\mod p,m\!\!\mod p^2,m\!\!\mod p^3,\dotsc)$ for each $p\in\mathbb P$. It is a matter of straightforward verification that $R_a^\prime = R_{\psi(a)}$ for any nonstandard $a\in M$.

$(2)$ Let us consider the set $Y$ consisting of all congruences $x\equiv _{p^k}\tau _p(k)$ and inequalities $x > k$, where $k\in\mathbb N\setminus\{0\}$ and $p\in\mathbb P$. Then $Y$ is a $1$-type in $\mathcal M$ (without parameters---positive integers are just constant terms in the language $L$) since any finite subset of $Y$ has a solution in $\mathbb N\subset M$ by Chinese Remainder Theorem. So there is a global solution, $a\in M$, of all congruences and inequalities from $Y$, using the weak saturation of $\mathcal M$. (Now, it is clear that $|M|\geq 2^\omega$.) The inequalities assure that $a$ is nonstandard, and checking the definitions, we immediately see that $R_a^\prime = R_\tau$.
\end{proof}

In the following section, we will freely use the fact (implicitly proved above) that, for every $\tau$, the ring $R_\tau$ inherits the discrete ordering from $\mathcal M^\pm$ via isomorphism with $R_a$ for some/any $a$.


\section{Properties of the examples}
\label{sec:prop_ex}


\subsection{Terminating division chains}

We are going to show that, for every $\tau$, the ring $R_\tau$ is quasi-Euclidean. So let $\tau$ be fixed for a while, put $R = R_\tau$, and let us denote by $R^+$ the subsemiring of $R$ consisting of polynomials with nonnegative leading coefficients. First, we prove the following auxiliary result.

\begin{lem} \label{lem:aux} Let 
$q,r\in R^+$ with $r\neq 0$, then there are (unique) $p,s\in R^+$ such that $q = pr + s$ and $s<r$.

\chge{Moreover: Let} $\tilde p, \tilde s\in\mathbb Q[x]$ be such that $q = \tilde pr+\tilde s$ and $\deg \tilde s < \deg r$. Further let $\tilde p = p^\prime /m$ where $p^\prime \in\mathbb Z[x]$, $m\in\mathbb N\setminus\{0\}$ and $0\leq k < m$ such that $(p^\prime - k)/m\in R^+$. Then the pair $(p,s)$ satisfies
$$(p, s) = \left\{\begin{array}{ll}
\left(\tilde p - 1,\tilde s+r\right) & \mbox{\rm for } k = 0\;\&\;\mbox{\rm lc}(\tilde s) <0, \\
\left(\frac{p^\prime - k}{m}, \tilde s + \frac{k}{m}r\right) & \mbox{\rm otherwise}.\end{array}\right.$$
\end{lem}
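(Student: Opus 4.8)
\emph{Plan of proof.} First I would pin down the ordering on $R$. Since $R\cong R_a$ for some nonstandard $a$, and $a$ exceeds every standard integer, one checks easily (using that, for each fixed $h\in\mathbb Z[x]$ of degree $\geq 1$ with positive leading coefficient, PA proves $h$ to be eventually positive) that a nonzero $f\in R$ is positive in $\mathcal M^\pm$ \iff its leading coefficient is positive, with the convention that a nonzero integer equals its own leading coefficient. In particular $R^+$ is exactly the set of non-negative elements of $R$, the order on $R$ is the one inherited from $\mathcal M^\pm$, hence discrete, and whenever $f,g\in R$ satisfy $\deg f<\deg g$ with $\mathrm{lc}(g)>0$ we have $f<g$. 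Uniqueness of $(p,s)$ is then immediate: if $q=pr+s=p_1r+s_1$ with $p,s,p_1,s_1\in R^+$ and $s,s_1<r$, then, assuming $s_1\geq s$, the equality $(p-p_1)r=s_1-s\geq 0$ forces $p\geq p_1$; were $p>p_1$, discreteness would give $p-p_1\geq 1$, hence $s_1-s\geq r$ and $s_1\geq r$, a contradiction. So $p=p_1$ and $s=s_1$.

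For existence together with the formula, I would start from the (unique) division $q=\tilde pr+\tilde s$ in $\mathbb Q[x]$ with $\deg\tilde s<\deg r$, observing by comparison of leading coefficients that either $\tilde p=0$ (precisely when $\deg q<\deg r$, and then $\tilde s=q$) or $\mathrm{lc}(\tilde p)>0$. Fixing a representation $\tilde p=p'/m$ with $p'\in\mathbb Z[x]$, $m\in\mathbb N\setminus\{0\}$, the $p$-adic description of $R_\tau$ together with the Chinese Remainder Theorem (equivalently, the divisibility property of $\mathcal M^\pm$ recalled earlier) yields a unique $k\in\{0,\dots,m-1\}$ with $(p'-k)/m\in R$; since this element has the same leading coefficient as $\tilde p$ in degree $\geq 1$, and the constant case is verified directly, in fact $(p'-k)/m\in R^+$, so this is the $k$ of the statement. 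Defining $(p,s)$ by the displayed formula, the identity $q=pr+s$ is a one-line computation in either branch (using $(p'-k)/m=\tilde p-\tfrac km$ in the ``otherwise'' branch and $\tilde p-1$ in the other), and then $p\in R$ from the formula and the hypothesis on $k$, and $s=q-pr\in R$. By the uniqueness just established, it now suffices to check $p,s\in R^+$ and $s<r$, which at the same time proves the first assertion and the formula.

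The remaining work is the case analysis for $p,s\geq 0$ and $s<r$, which I expect to be the only delicate point---not because any idea is missing, but because the degenerate configurations ($\tilde p=0$, $\tilde s=0$, $\deg r=0$) must be handled carefully. In the ``otherwise'' branch with $k\neq 0$, one has $\deg s=\deg r$ and $\mathrm{lc}(s)=\tfrac km\mathrm{lc}(r)$ with $0<\tfrac km<1$, so $s>0$ while $r-s$ has positive leading coefficient, i.e.\ $0<s<r$; moreover $\mathrm{lc}(p)=\mathrm{lc}(\tilde p)\geq 0$, so $p\in R^+$. In the ``otherwise'' branch with $k=0$, one has $\tilde p=(p'-k)/m\in R^+$ and, negating the other branch's hypothesis, $\tilde s=0$ or $\mathrm{lc}(\tilde s)\geq 0$; hence $p=\tilde p\in R^+$, $s=\tilde s\in R^+$, and $\deg s<\deg r$ gives $s<r$. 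In the remaining branch, $k=0$ and $\mathrm{lc}(\tilde s)<0$, so $\tilde s\neq 0$ and $\tilde s<0$ in $R$ (if $\deg\tilde s=0$ one uses $R\cap\mathbb Q=\mathbb Z$ to see $\tilde s$ is a negative integer); thus $s=\tilde s+r$ has $\deg s=\deg r$ and $\mathrm{lc}(s)=\mathrm{lc}(r)>0$, so $s\in R^+$, while $s-r=\tilde s<0$ gives $s<r$; finally $\tilde p\neq 0$ here (else $\tilde s=q\geq 0$, contradicting $\mathrm{lc}(\tilde s)<0$), so $\tilde p\in R^+$ is either a positive integer, whence $p=\tilde p-1\geq 0$, or has $\deg\tilde p\geq 1$ and then $\mathrm{lc}(p)=\mathrm{lc}(\tilde p)>0$. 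So in every case $p,s\in R^+$ with $q=pr+s$ and $s<r$, which by uniqueness is the required pair and establishes the formula.
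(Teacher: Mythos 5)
Your argument is correct and is exactly the ``straightforward verification'' that the paper leaves to the reader: the characterization of positivity in $R_\tau$ by leading coefficients, uniqueness of $(p,s)$ from discreteness of the order, existence of the required $k$ from the $p$-adic (or $\mathcal M^\pm$-divisibility) description of $R_\tau$, and a case-by-case check that the displayed pair lies in $R^+$ and satisfies $q=pr+s$, $s<r$. The only nitpick is the redundant remark $\mathrm{lc}(p)=\mathrm{lc}(\tilde p)$ in the $k\neq 0$ case (which can fail when $\tilde p$ is a constant in $(0,1)$), but $p\in R^+$ is there anyway by the hypothesis on $k$, so nothing is affected.
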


\begin{proof} 
Straightforward verification.
\end{proof}

If we look at $R_a$ (for $a$ with $R_a^\prime = R$), there is only one pair $(p, s)$ in the model $\mathcal M$ satisfying the properties from Lemma~\ref{lem:aux}, namely the pair $(q \ \mathrm{div}\ r, \mbox{$q$ mod $r$})$. Here, div stands for the binary operation of integer division. Thus in particular, we have that $R^+$ as a subsemiring of $\mathcal M$ is closed under binary operations div and mod.

Consequently, we say that a division chain $\divchain{r_{-1}}{r_0}{ccc}{q_1 & \ldots & q_n}{r_1 & \ldots & r_n}$ in $R^+$ with $r_{-1},r_0>0$ is \emph{quasi-Euclidean} if $q_{i+1} = r_{i-1}\mbox{ div } r_i$ and $r_{i+1} = r_{i-1}\mbox{ mod } r_i$, for $i\geq 0$. A consequence of the proof of the following theorem is that, for any nonzero $a,b\in R^+$, there exists a positive integer $n$ such that the quasi-Euclidean chain of length $n$ starting from the pair $(a,b)$ is terminating.

\begin{thm} \label{thm:quasi} $R$ is a quasi-Euclidean domain. In particular, it is B\' ezout.
\end{thm}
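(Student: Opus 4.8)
The plan is to establish, via Proposition~\ref{prop:equiv_def}, that every pair in $R^2$ has a terminating $k$-stage division chain for some positive integer $k$; by that proposition this is equivalent to quasi-Euclideanness, and it forces $R$ to be B\'ezout. By clearing denominators (multiplying a pair $(a,b)$ by a common nonzero integer does not affect the existence of a terminating division chain, since the quotients are unchanged and only the remainders get scaled), and then by replacing each of $a,b$ by $|a|,|b|$ (a sign change only alters the $q_i$ and the signs of the $r_i$), it suffices to treat pairs $(a,b)$ with $a,b\in R^+$ and $b\neq 0$; the case $b=0$ being trivial. So fix $a,b\in R^+$ with $b>0$ and run the quasi-Euclidean division chain starting from $(a,b)$: set $r_{-1}=a$, $r_0=b$, and inductively $q_{i+1}=r_{i-1}\,\mathrm{div}\,r_i$, $r_{i+1}=r_{i-1}\,\mathrm{mod}\,r_i$ as long as $r_i\neq 0$, using that $R^+$ is closed under $\mathrm{div}$ and $\mathrm{mod}$ (noted after Lemma~\ref{lem:aux}). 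I must show this process terminates, i.e.\ some $r_n=0$.

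The key point is that this chain, computed inside $R^+\subseteq\mathcal M$, is literally the Euclidean algorithm of the discretely ordered semiring $\mathcal M$ (equivalently, of $\mathcal M^\pm$), applied to the pair $(a,b)$. Since $r_0>r_1>r_2>\cdots\geq 0$ is a strictly decreasing sequence of elements of $\mathcal M$ with $0$ as a lower bound, one would like to invoke well-foundedness — but $\mathcal M$ is a \emph{nonstandard} model, so infinite descending sequences of its elements do exist in general, and this is exactly where care is needed. The resolution is to observe that the \emph{length} of the Euclidean chain in $\mathcal M$ is controlled by an internally definable quantity. Concretely, in any model of PA the statement ``the Euclidean algorithm on $(u,v)$ with $v\le u$ halts in at most $v$ steps'' is provable in PA (each remainder drops by at least $1$), hence holds in $\mathcal M$; so after at most $r_1$ steps (an element of $\mathcal M$, but that is fine — the bound on the \emph{number} of steps is an $\mathcal M$-integer, and the process of producing $r_0,r_1,\dots$ is a genuine finite-in-$\mathcal M$ recursion) we reach remainder $0$. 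The subtlety I must address is that a priori ``terminating $k$-stage division chain'' in Proposition~\ref{prop:equiv_def} requires $k$ to be a \emph{standard} positive integer, whereas the internal bound is only an element of $\mathcal M$. This is where the hypothesis $a,b\in R=R_\tau\subseteq\mathbb Q[x]$ is essential: although the chain lives in $\mathcal M$, the remainders $r_i$ are elements of $R$, hence polynomials in $\mathbb Q[x]$, and $\deg r_{i+1}<\deg r_i$ whenever $r_i\neq 0$ — because by Lemma~\ref{lem:aux} the remainder $s$ of $q\,\mathrm{div}\,r$ satisfies $\deg s<\deg r$ (note the lemma's ``moreover'' clause: $s$ equals either $\tilde s+r$ or $\tilde s+\tfrac{k}{m}r$, and in the first case $\mathrm{lc}(\tilde s)<0$ forces the degree not to increase, while $\deg\tilde s<\deg r$). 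Hence the degrees $\deg r_0>\deg r_1>\cdots\geq -1$ form a strictly decreasing sequence of ordinary integers bounded below by $-1$, so it must reach $-1$, i.e.\ $r_n=0$, within $\deg b+2$ steps — a \emph{standard} bound.

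Putting this together: for any $a,b\in R^+$ with $b>0$, the quasi-Euclidean chain starting from $(a,b)$ terminates after at most $n\le\deg b+2$ stages, yielding a terminating $n$-stage division chain in $R$; by the reductions above the same conclusion holds for arbitrary $(a,b)\in R^2$ with $b\neq 0$. Condition~(4) of Proposition~\ref{prop:equiv_def} is thus verified, so $R=R_\tau$ is quasi-Euclidean, and in particular B\'ezout. (This also proves the claim preceding the theorem, that the quasi-Euclidean chain from any nonzero pair in $R^+$ is eventually terminating.) The main obstacle, and the only genuinely non-formal step, is the one just highlighted: reconciling the internal-to-$\mathcal M$ execution of the division algorithm with the requirement of a \emph{standard}-length chain, which is settled by passing from the $\mathcal M$-valued ``size'' of the remainders to their polynomial degrees, which are honest natural numbers.
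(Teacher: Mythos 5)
There is a genuine gap at the heart of your termination argument. You claim that, along the quasi-Euclidean chain, $\deg r_{i+1}<\deg r_i$ whenever $r_i\neq 0$, ``because by Lemma~\ref{lem:aux} the remainder $s$ of the division satisfies $\deg s<\deg r$''. Lemma~\ref{lem:aux} asserts no such thing: it gives $0\le s<r$ in the \emph{discrete order} of $R_\tau$, and its ``moreover'' clause shows that in general $s=\tilde s+\tfrac{k}{m}r$ (or $s=\tilde s+r$), which has the \emph{same} degree as $r$ whenever $k>0$ (resp.\ always, since $\deg\tilde s<\deg r$); the degree drops only in the subcase $k=0$ with $\lc(\tilde s)\ge 0$, where $s=\tilde s$. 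Concretely, for $a=8x$, $b=5x$ (elements of every $R_\tau$) the quasi-Euclidean chain has remainders $3x,2x,x,0$, all of degree $1$ until the end; taking consecutive Fibonacci multiples of $x$ makes the chain arbitrarily long while $\deg b=1$. So your standard bound $n\le\deg b+2$ is false, and with it the only bridge you built from the internal (nonstandard-length) execution in $\mathcal M$ to a terminating chain of standard length. (Your preliminary reductions to pairs in $R^+$ are fine, and the internal PA bound is correct but, as you note yourself, does not help.)

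The paper closes exactly this gap by a different route: instead of condition (4) it verifies condition (1) of Proposition~\ref{prop:equiv_def}, with an explicit map $\phi:R^2\to(2\times\mathbb N^4,\mathrm{lex})$, namely $\phi(q,r)=(\delta_{q,r},\,\deg q+1,\,\deg r,\,n_{q,r},\,n_{q,r}\cdot|\lc(q)|)$, where $n_{q,r}$ is the least common denominator of $q,r$. The troublesome case is precisely $\deg q=\deg r=\deg s$: there the quotient $p$ is a nonnegative integer, so $s=q-pr$ has denominator dividing $n_{q,r}$ and $\lc(r)\le\lc(q)$; and the explicit description in Lemma~\ref{lem:aux} rules out having both $n_{r,s}=n_{q,r}$ and $\lc(r)=\lc(q)$ (that would force $\tilde p=1$, hence either $p=0$, contradicting $q>r$, or $s=\tilde s$, contradicting $\deg s=\deg r$). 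Thus $\phi$ strictly decreases, and termination of the quasi-Euclidean chain in standardly many steps is a \emph{consequence} of this lexicographic descent, not something obtainable from degrees alone. To repair your proof you would need to add, in the equal-degree case, a secondary measure playing the role of the last two coordinates of $\phi$ (common denominator and leading coefficient); without it the argument does not go through.
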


\begin{proof} We will show that the condition $(1)$ from Proposition~\ref{prop:equiv_def} is satisfied. For this sake, we define $\phi: R^2 \to (2\times\mathbb N^4, lex)$ by the formula $\phi(q,r) = (0,0,0,0,0)$ for $r = 0$, and
$$\phi(q,r) = (\delta _{q,r}, \deg q + 1, \deg r, n_{q,r},n_{q,r}\cdot |\mbox{lc}(q)|)$$ otherwise. Here, $\delta _{q,r}$ is $1$ if $|q|\leq |r|$, and $0$ otherwise; $n_{q,r}\in\mathbb N$ denotes the least common denominator of $q,r$. In the rest of the proof, we assume that $q>r>0$. The other cases follow easily. (Notice that $\phi(q,r) = \phi(|q|,|r|)$.)

Since $\mathbb Q[x]$ is a Euclidean ring with the norm $\deg(-) + 1$, there are $\tilde p, \tilde s\in\mathbb Q[x]$ such that $q = \tilde pr+\tilde s$ and $\deg \tilde s < \deg r$. By Lemma~\ref{lem:aux}, we get $p,s\in R^+$ satisfying $s<r$ and $q = pr + s$.

Suppose $s\neq 0$. We need to show that $\phi (r,s)<\phi(q,r)$ in the lexicographic order of $2\times\mathbb N^4$. Since $0<s<r$, we have $\delta _{r, s} = 0 = \delta _{q,r}$. We may assume $\deg q = \deg r = \deg s$ (otherwise, we are done immediately). Then $p\in\mathbb N$. Further, we have $q,r\in\frac{\mathbb Z[x]}{n_{q,r}}$, and hence $s = q - pr\in\frac{\mathbb Z[x]}{n_{q,r}}$. Therefore $n_{r,s}\leq n_{q,r}$. Moreover, from $r<q$, we have $\mbox{lc}(r)\leq\mbox{lc}(q)$.

Assume $n_{r,s} = n_{q,r}$ and $\mbox{lc}(r) = \mbox{lc}(q)$. Then, from the definition of $\tilde p$, we have $\tilde p = 1$, and thus $p^\prime = 1 = m$, $k = 0$ in Lemma~\ref{lem:aux}. The first case in the definition of $(p,s)$ leads to a contradiction, since we get $p = 0$ (and so $q=s<r$). So it must be that $p = \tilde p = 1$ and $s = \tilde s$. In particular, we see that $\deg s = \deg \tilde s < \deg r$ which also contradicts one of our assumptions.

Finally, $R$ is B\' ezout by Proposition~\ref{prop:equiv_def}.
\end{proof}


\subsection{Separating the PID cases}

In the following few paragraphs, we distinguish the choices of $\tau$ which imply that $R_\tau$ is a PID. We also show that there are $2^\omega$ pairwise nonisomorphic domains among the rings $R_\tau$ which are PID, and the same cardinality of those which are not PID. The next lemma will be useful.

\begin{lem} \label{lem:PID} Let $\tau\in\prod _{p\in\mathbb P}\mathbb J_p$. Then $R_\tau$ is a PID if and only if, for each nonzero $h\in\mathbb Z[x]$, the set $S_h = \{(p,k)\in\mathbb P\times (\mathbb N\setminus\{0\})\mid \pi _k (h(\tau _p)) = 0\}$ is finite.
\end{lem}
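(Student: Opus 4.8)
The plan is to prove both implications by relating divisibility in $R_\tau$ to membership of pairs $(p,k)$ in the sets $S_h$. First I would unwind what it means for an element of $R_\tau$ to be a non-unit and non-zero: writing an element as $h/n$ in lowest terms with $h\in\mathbb Z[x]$, $n\in\mathbb N\setminus\{0\}$, the constant polynomials of $R_\tau$ are exactly the integers (Remark~(1)), so $R_\tau^\times=\{\pm 1\}$. Since $R_\tau$ is B\'ezout by Theorem~\ref{thm:quasi}, it is a PID if and only if it is a UFD, if and only if it is atomic (every B\'ezout atomic domain is a PID). So the real task is to decide when $R_\tau$ is atomic, i.e.\ when every nonzero non-unit is a finite product of irreducibles, equivalently when there is no infinite strictly descending chain of principal ideals, equivalently when no nonzero element is divisible by arbitrarily long products of non-units.

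For the direction ``$S_h$ finite for all $h$ $\Rightarrow$ PID'', I would argue that if some $S_h$ is infinite, then $h\in R_\tau$ (it lies in $\mathbb Z[x]\subseteq R_\tau$) but $h/N\in R_\tau$ for infinitely many pairwise-incomparable denominators: concretely, enumerating infinitely many $(p_i,k_i)\in S_h$ with distinct primes $p_i$, each prime power $p_i^{k_i}$ divides $h$ in $R_\tau$ (since $\pi_{\mathrm v_{p_i}(p_i^{k_i})}(h(\tau_{p_i}))=\pi_{k_i}(h(\tau_{p_i}))=0$ and the condition is vacuous at the other primes), so $h/(p_1^{k_1}\cdots p_m^{k_m})\in R_\tau$ for every $m$, giving an infinite descending chain of principal ideals $hR_\tau\subsetneq (h/p_1^{k_1})R_\tau\subsetneq\cdots$; hence $R_\tau$ is not atomic, so not a PID. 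Conversely, assuming every $S_h$ is finite, I would show $R_\tau$ is atomic. Given a nonzero non-unit $f=h/n\in R_\tau$ with $\gcd$ of coefficients of $h$ equal to $1$ (we may clear the content into $n$, or handle the integer factor separately since $\mathbb Z$ is a PID), I want a bound on the length of any factorization of $f$. The degree of $h$ bounds the number of non-constant (degree $\geq 1$) factors, since $\mathbb Q[x]$ is a UFD and degrees add. So it remains to bound the ``constant part'' of any factorization, i.e.\ to show that for fixed $h$ there is a global bound $K(h)$ on the total number of integer prime factors one can extract: if $h/(p_1^{a_1}\cdots p_r^{a_r})\in R_\tau$ with the $p_i$ distinct and $a_i\geq 1$, then each pair $(p_i,a_i)\in S_{h}$ — more precisely, reducing modulo $p_i^{a_i}$ forces $\pi_{a_i}(h(\tau_{p_i}))=0$, hence $(p_i,1),\dots,(p_i,a_i)\in S_h$ — and since $S_h$ is finite, both $r$ and each $a_i$ are bounded by $|S_h|$. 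Thus every factorization of $f$ has length at most $\deg h + K(h)$ (plus the contribution from the numerical factor), so $R_\tau$ is atomic, hence a PID.

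The main obstacle I anticipate is the bookkeeping in the converse direction: namely making precise that an arbitrary factorization of a fixed element $f$ in $R_\tau$ can only involve finitely many ``integer denominators'' in a uniformly bounded way, and correctly handling the interplay between the integer content of the numerator and the allowed denominators (the subtle point being that in $R_\tau$ one cannot necessarily cancel a prime $p$ from $h$ unless the relevant $\pi_k(h(\tau_p))$ vanishes, so the relation ``$p\mid f$ in $R_\tau$'' is genuinely governed by the sets $S_h$ rather than by naive divisibility in $\mathbb Q[x]$). Once one checks carefully that $q\mid h$ in $R_\tau$ for $q\in\mathbb Z$ is equivalent to $\pi_{\mathrm v_p(q)}(h(\tau_p))=0$ for all $p$, everything reduces to the finiteness of the $S_h$ as claimed, and the argument closes.
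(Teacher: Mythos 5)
Your overall strategy is sound and close to the paper's: both implications come down to the observation that, for an integer $n>0$, one has $h/n\in R_\tau$ exactly when $(p,v_p(n))\in S_h$ for every prime $p$ dividing $n$, combined with $R_\tau$ being B\'ezout (Theorem~\ref{thm:quasi}) and $R_\tau\cap\mathbb Q=\mathbb Z$. Your treatment of ``all $S_h$ finite $\Rightarrow$ PID'' differs in presentation from the paper: you bound the length of an arbitrary factorization of a fixed $f=h/n$ directly (at most $\deg h$ nonconstant factors, and the integer factors bounded because every admissible denominator divides a fixed $N_0(h)$ determined by the finite set $S_h$), whereas the paper argues contrapositively, extracting from an infinite divisibility-descending sequence a single numerator $h$ (degrees stabilize because $\mathbb Q[x]$ is Euclidean, leading coefficients because $\mathbb Z$ is Noetherian) with $S_h$ infinite. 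These are mirror images of the same idea, and your version closes, granted the standard facts that an atomic (indeed bounded-factorization) B\'ezout domain is a PID. Incidentally, your labels for the two implications are swapped: the argument ``some $S_h$ infinite $\Rightarrow$ not PID'' is the contrapositive of ``PID $\Rightarrow$ all $S_h$ finite'', not of the converse; but both implications are in fact covered, so this is only a slip of bookkeeping.

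There is, however, one genuine gap, in that first argument. From ``$S_h$ is infinite'' you pass to ``enumerating infinitely many $(p_i,k_i)\in S_h$ with distinct primes $p_i$''. This is not always possible: since $(p,k)\in S_h$ forces $(p,j)\in S_h$ for all $1\le j\le k$, the set $S_h$ can be infinite while involving only a single prime, namely when $h(\tau_p)=0$ in $\mathbb J_p$ for some $p$ --- and this case genuinely occurs; it is exactly how the paper manufactures its non-UFD examples (take $\tau_p=(0,0,0,\dotsc)$ and $h=x$). In that situation your construction produces no infinite chain at all. The repair is immediate and is precisely the case distinction the paper makes: if infinitely many distinct primes occur in $S_h$, use your chain $h,\ h/p_1^{k_1},\ h/(p_1^{k_1}p_2^{k_2}),\dotsc$; otherwise choose $p$ with $(p,k)\in S_h$ for all $k\ge 1$ and use $h,\ h/p,\ h/p^2,\dotsc$, which is again a strictly ascending chain of principal ideals because $1/p\notin R_\tau$. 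With this second case added, your proof is complete.
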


\begin{proof} Assume that $S_h$ is infinite for some nonzero $h\in\mathbb Z[x]$. Then either the set $\{p\in\mathbb P\mid h/p\in R_\tau\}$ is infinite, or there exists a prime $p$ such that $h/p^k\in R_\tau$ for any $k\in\mathbb N$. In the first case, we fix an enumeration $\{p_1, p_2, p_3,\dotsc\}$ of that set, and---using the definition of $R_\tau$---we see that $(h/p_1, h/(p_1p_2), h/(p_1p_2p_3),\dotsc)$ is an infinite descending (with respect to divisibility) sequence of elements in $R_\tau$; thus $R_\tau$ is not a UFD. In the second case, we use the same argument for the sequence $(h/p, h/p^2,h/p^3,\dotsc)$.

If $R_\tau$ is not a PID, then (since it is B\' ezout by Theorem~\ref{thm:quasi}) there has to be an infinite sequence of elements in $R_\tau$ descending in divisibility $(h_1/n_1, h_2/n_2,\dotsc)$; here $h_i\in\mathbb Z[x]$ and $n_i$ are positive integers coprime with $h_i$ in $\mathbb Z[x]$, for all $i>0$. The polynomials $h_i$ will eventually have the same degree ($\mathbb Q[x]$ is Euclidean) and absolute value of the leading coefficient ($\mathbb Z$ is Noetherian), and so we may w.l.o.g.\ assume that all the polynomials $h_i$ are equal to a single nonzero $h\in\mathbb Z[x]$. It directly follows that, for this $h$, the set $S_h$ is infinite.
\end{proof}

Let us take a representative subset $J$ of $\prod _{p\in\mathbb P}\mathbb J_p$ in the sense that, for each $\rho$, there is a $\tau\in J$ such that $R_\tau\cong R_\rho$, and for all $\tau, \sigma\in J$, $\tau\neq\sigma$, we have $R_\tau\not\cong R_\sigma$. Then $J$ is a disjoint union of the sets $A$ and $B$, where $A = \{\tau\in J \mid R_\tau\mbox{ is a PID}\}$ and $B = \{\tau\in J \mid R_\tau\mbox{ is not a UFD}\}$.

\begin{prop} $|A| = |B| = 2^\omega$.
\end{prop}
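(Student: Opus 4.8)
The plan is to exhibit, inside each of the two classes, a family of $2^\omega$ tuples $\tau$ whose rings $R_\tau$ are pairwise non-isomorphic; since $|J|\le\left|\prod_{p\in\mathbb P}\mathbb J_p\right| = 2^\omega$ trivially, this gives $|A|=|B|=2^\omega$. The key observation is that a ring isomorphism $R_\tau\cong R_\sigma$ must (after restricting to the prime subring $\mathbb Z$, which is fixed, and using part~(1) of the Remark, $R_\tau\cap\mathbb Q=\mathbb Z$) send $x$, or rather the transcendental generator, to a polynomial of degree one; so isomorphism of $R_\tau$ and $R_\sigma$ is controlled by finitely much data — essentially an affine substitution $x\mapsto ux+v$ with $u\in\mathbb Q^\times$, $v\in\mathbb Q$ — and thus each isomorphism class is countable (or at least of size $<2^\omega$), so there must be $2^\omega$ classes in any subset of $\prod_{p\in\mathbb P}\mathbb J_p$ of full cardinality on which $R_{(-)}$ is injective (which it is, by the remark that $\sigma\ne\tau$ implies $R_\sigma\ne R_\tau$).

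More precisely, first I would prove the structural fact: if $\varphi\colon R_\tau\to R_\sigma$ is a ring isomorphism, then $\varphi$ fixes $\mathbb Z$ pointwise (it fixes $1$, hence all of $\mathbb Z$), and $\varphi$ extends to an isomorphism of the fraction fields $\mathbb Q(x)\to\mathbb Q(x)$ fixing $\mathbb Q$; such a field automorphism is a Möbius transformation $x\mapsto\frac{ax+b}{cx+d}$, but since $\varphi(x)\in R_\sigma\subseteq\mathbb Q[x]$ and $\varphi^{-1}(x)\in\mathbb Q[x]$ as well, it must in fact be affine, $\varphi(x)=ux+v$ with $u\in\mathbb Q^\times$, $v\in\mathbb Q$. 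Hence the isomorphism class of $R_\tau$ within $\{R_\sigma\mid\sigma\in\prod_p\mathbb J_p\}$ is the set of $R_\sigma$ obtained as images of $R_\tau$ under such affine maps; as there are only countably many pairs $(u,v)$ — well, $\mathbb Q^\times\times\mathbb Q$ is countable — each class has at most countably many members. Therefore $|J|=2^\omega$, and I would then split: by Lemma~\ref{lem:PID}, whether $R_\tau$ is a PID depends only on the finiteness of the sets $S_h$.

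The remaining step is to produce $2^\omega$ choices of $\tau$ landing in $A$ and $2^\omega$ landing in $B$. For $A$: take $\tau$ with all coordinates $\tau_p$ transcendental-like, e.g.\ for each $p$ pick $\tau_p\in\mathbb J_p$ that is not a root of any nonzero polynomial in $\mathbb Z[x]$ modulo any $p^k$ — concretely, arrange $\pi_k(h(\tau_p))\ne 0$ for all nonzero $h\in\mathbb Z[x]$ and all $k$; this is possible because for fixed $p$ there are only countably many congruence conditions to avoid and $\mathbb J_p$ is uncountable, and one has $2^\omega$ independent choices (e.g.\ coordinate $\tau_2$ can be varied over a perfect set of such elements) so that by the injectivity of $\tau\mapsto R_\tau$ we obtain $2^\omega$ distinct rings, all PID by Lemma~\ref{lem:PID} since every $S_h$ is then empty. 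For $B$: fix one prime, say $p=2$, and force $\tau_2$ to be a root of some fixed $h_0\in\mathbb Z[x]$ in every $\mathbb Z_{2^k}$ — for instance take $h_0(x)=x$, demanding $\pi_k(\tau_2)=0$ for all $k$, i.e.\ $\tau_2=0$ — so that $h_0/2^k\in R_\tau$ for all $k$ and $S_{h_0}$ is infinite, whence $R_\tau$ is not a UFD; then let all the other coordinates $\tau_p$ ($p$ odd) range as in the previous paragraph to get $2^\omega$ distinct non-UFD rings.

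The main obstacle I anticipate is the structural lemma that each isomorphism class is small: one must be careful that $R_\tau$ really does determine its transcendental generator up to affine change, ruling out pathological automorphisms, and one must check that the affine-invariance does not accidentally collapse the two families in the counting (it does not, because for $A$ emptiness of all $S_h$ is preserved under $x\mapsto ux+v$, and for $B$ infinitude of $S_{h_0}$ passes to infinitude of $S_{h_0((x-v)/u)\cdot(\text{clearing denominators})}$, so both $A$ and $B$ are unions of affine-orbits and the orbit-counting argument applies uniformly). Once that is in place, the cardinality bookkeeping — countably many affine maps, $2^\omega$ tuples, injectivity of $\tau\mapsto R_\tau$ — is routine.
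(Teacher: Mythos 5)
Your overall route differs from the paper's and is, in outline, viable: you make explicit a structural fact the paper leaves implicit (any isomorphism $R_\tau\to R_\sigma$ fixes $\mathbb Z$, extends to $\mathbb Q(x)$, and is forced to be an affine substitution $x\mapsto ux+v$, so each isomorphism class contains at most countably many of the pairwise distinct rings $R_\tau$), and then you only need $2^\omega$ tuples landing in each of the two classes. Your non-UFD family is fine: setting $\tau_2=0$ makes $S_x$ infinite regardless of the remaining coordinates, so Lemma~\ref{lem:PID} applies and any $2^\omega$ choices of the odd coordinates suffice.

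The genuine gap is in your PID family. The condition you propose to arrange, $\pi_k(h(\tau_p))\neq 0$ for \emph{all} nonzero $h\in\mathbb Z[x]$ and all $k$, is unsatisfiable: for any $\tau_p$ take $c\in\{0,\dots,p-1\}$ with $c\equiv\tau_p\pmod p$ and $h=x-c$ (or simply the constant $h=p$); then $\pi_1(h(\tau_p))=0$. This is not just the cosmetic slip ``$S_h$ empty'' versus ``$S_h$ finite'': what Lemma~\ref{lem:PID} actually requires is that for each fixed nonzero $h$ only finitely many pairs $(p,k)$ satisfy $\pi_k(h(\tau_p))=0$, and the hard half of this is a condition coupling the coordinates \emph{across} primes --- one must prevent $p\mid h(\tau_p)$ from occurring for infinitely many $p$, simultaneously for every $h$. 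Choosing each $\tau_p$ ``generically'' and independently does not achieve this (for a fixed $h$ of positive degree the bad residues mod $p$ have density roughly $\deg h/p$, whose sum over $p$ diverges, so uncoordinated choices typically produce infinitely many bad primes, i.e.\ a non-UFD rather than a PID). This is exactly what the paper's prescription $\pi_1(\tau_p)=\lfloor\log p\rfloor$ is for: then $0<|h(\lfloor\log p\rfloor)|<p$ for all sufficiently large $p$, ruling out $\pi_1(h(\tau_p))=0$ for almost all $p$ at once, while the per-prime requirement that $\tau_p$ be no root in $\mathbb J_p$ of a nonzero integer polynomial (the countable-avoidance argument you did have in mind, which is achievable) disposes of the finitely many remaining primes. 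Insert such a coordinated choice (the $\log$ trick, or an enumeration where the $n$-th prime only ``protects'' the first $n$ polynomials); the freedom left in the higher $p$-adic digits still yields $2^\omega$ admissible tuples, and then your orbit-counting argument does close the proof.
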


\begin{proof} Let us assume that $|A|< 2^\omega$. For each $p\in\mathbb P$, we define $\tau _p\in\mathbb J_p$ in such a way that:
\begin{enumerate}
\item $\pi _1(\tau _p) = \lfloor \log p \rfloor$,
\item $n\cdot\tau _p\not\in\{h(\sigma _p) \mid \sigma\in A \;\&\; h\in\mathbb Z[x]\},\hbox{ for every positive integer }n$,
\item $\tau _p$ is not a root in $\mathbb J_p$ of a nonzero polynomial from $\mathbb Z[x]$.
\end{enumerate}

This is clearly possible since the first two conditions are satisfied by $2^\omega$ different elements of $\mathbb J_p$. Let $\tau = \prod _{p\in\mathbb P}\tau _p$. We claim that $R_\tau$ is a PID which leads immediately to a contradiction (by $(2)$, there cannot be $\sigma\in A$ with $R_\tau\cong R_\sigma$).

To prove this, we use Lemma~\ref{lem:PID}. Let us fix a nonzero $h\in\mathbb Z[x]$. Then, using the limit comparison of $h$ and $\log$, we deduce that, for all sufficiently large primes $p$, we have $0 < |h(\lfloor \log p \rfloor)|<p$ which further implies $\pi _1(h(\tau _p)) \neq 0$. Together with the condition $(3)$, we get that $S_h$ is finite. This finishes the proof that $|A| = 2^\omega$.

To see that $|B| = 2^\omega$, it is enough to fix a $\sigma\in A$, and for each nonzero subset $P$ of $\mathbb P$ define $\tau ^P\in B$ by setting $\tau _p^P = (0,0,0,\dotsc)$ for $p\in P$, and $\tau _p^P = \sigma _p$ otherwise.
\end{proof}



\subsection{Keeping distance from Euclidean domains}

Here, we prove that no $R_\tau$ is a $k$-stage Euclidean domain, whatever positive integer $k$ we take. From now on, we work in a fixed ring $R_\tau$. We start with two slightly technical lemmas \footnote{Lemma \ref{lem:jedna} is a modified version of a classical result on continued fractions by Perron \chge{(see \cite{P}).}}.

\begin{lem} \label{lem:jedna}
Let $Q = \divchainab{ccc}{q_1 & \ldots & q_k}{r_1 & \ldots & r_k}$ be a division chain starting from $(a,b)$ with $a,b,k>0$. There is a division chain $Q' = \divchainab{ccc}{q'_1 & \ldots & q'_l}{r'_1 & \ldots & r'_l}$ with $q'_i>0$ for $i>1$ such that $|r_k| = |r'_l|$ and $l\leq 2k-1$.
\end{lem}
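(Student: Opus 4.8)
The plan is to construct $Q'$ from $Q$ by a local surgery that replaces every occurrence of a non-positive quotient by a short block of positive ones, exploiting the freedom we have in choosing quotients in a division chain. Recall that a division chain is completely determined by its starting pair $(a,b)$ together with the sequence of quotients $q_1,\dotsc,q_k$; the remainders are then forced by $r_{i} = r_{i-2} - q_i r_{i-1}$ (with $r_{-1}=a$, $r_0=b$). Our goal is to find a new quotient sequence $q'_1,\dotsc,q'_l$ with $l\le 2k-1$, all of $q'_2,\dotsc,q'_l$ strictly positive, and such that the final remainder agrees with $r_k$ up to sign.

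First I would record the key algebraic identity behind the surgery. If in some step we are about to divide $r_{i-2}$ by $r_{i-1}$ with quotient $q_{i}\le 0$, then the classical continued-fraction trick is to write instead
\begin{equation*}
r_{i-2} = (q_i - 1) r_{i-1} + (r_{i-1} + r_i), \qquad r_{i-1} = 1\cdot(r_{i-1}+r_i) + (-r_i),
\end{equation*}
so a single step with a bad quotient is replaced by two steps whose quotients are $q_i-1$ and $1$, at the cost of flipping the sign of the running remainder from $r_i$ to $-r_i$ and introducing the intermediate remainder $r_{i-1}+r_i$. Iterating/combining such moves, together with the observation that the \emph{first} quotient is allowed to be arbitrary (only $q'_i$ for $i>1$ must be positive), lets one massage the chain so that every quotient after the first is $\ge 1$. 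The sign flips accumulate but only affect $|r'_l|$ trivially, and each original step is expanded into at most two steps, except that the very first step need not be expanded, which yields the bound $l \le 2k-1$ rather than $2k$.

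The cleanest way to carry this out is by induction on $k$, peeling off the first division $a = q_1 b + r_1$: apply the inductive hypothesis to the chain of length $k-1$ starting from $(b, r_1)$ to get a chain of length $\le 2(k-1)-1 = 2k-3$ with all quotients past the first positive and the same final remainder up to sign; then prepend the step $a = q_1 b + r_1$ and, if $q_1 \le 0$ (so that $q_1$ as a later quotient of the combined chain would be non-positive — but here $q_1$ is the \emph{first} quotient, hence allowed), no further correction is needed, giving length $\le 1 + (2k-3) = 2k-2 \le 2k-1$. One must be slightly careful about the case $q_1 > 0$ versus the hinge between the prepended step and the first step of the inductive chain: if the first quotient $q'_1$ produced by the inductive step is non-positive it has now become a \emph{non-first} quotient in the combined chain and must be fixed by one application of the two-step move, costing one extra step and accounting for the full $2k-1$. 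Edge cases ($r_1 = 0$, or $r_{k-1}=0$, or small $k$) should be checked separately but are immediate.

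The main obstacle is bookkeeping rather than a genuine conceptual difficulty: one has to make the surgery moves interact correctly at the seams (ensuring that after fixing a bad quotient the \emph{next} quotient, which has now changed, is still $\ge 1$, and that no new bad quotients are created downstream) and simultaneously keep the step-count bounded by $2k-1$ with the right base case. I expect the delicate point to be verifying that the two-step replacement, when applied repeatedly to consecutive bad quotients, does not cascade into more than one extra step per original step; this is exactly where the continued-fraction structure (Perron's identity, as the footnote indicates) does the work, since replacing $q_i \le 0$ by the block $(q_i-1, 1)$ produces a subsequent quotient that is automatically $\ge 1$ after absorbing the sign flip, so the process terminates cleanly with the claimed bound.
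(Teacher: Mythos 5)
Your key identity is the right one (it is exactly Perron's continued--fraction move, the paper's transformation $T_1$), but the way you deploy it does not close. First, as written your surgery does not repair anything: replacing a step with quotient $q_i\le 0$ by the block $(q_i-1,\,1)$ leaves the non-positive quotient $q_i-1$ in place. The move actually has to be anchored at the step \emph{preceding} the bad quotient: changing $q_i$ to $q_i-1$ and inserting the quotient $1$ turns the \emph{next} quotient $q_{i+1}<0$ into $-(q_{i+1}+1)\ge 0$, but at the price of negating \emph{all} subsequent quotients $q_{i+2},\dots,q_k$ (and alternating the signs of the subsequent remainders). This global sign flip is precisely what breaks your back-to-front induction: after you have fixed the tail chain for $(b,r_1)$ so that all its quotients past the first are positive and then prepended $a=q_1b+r_1$, repairing the single bad quotient now sitting at position $2$ negates the entire repaired tail, so it does not cost ``one extra step''---it reintroduces up to $2k-3$ bad quotients and the $2k-1$ accounting collapses. (One can check that no fix of the seam using only one extra step and leaving the tail untouched exists in general: forcing the pair of remainders entering the tail to be $(r_1,r'_1)$ or $(-r_1,-r'_1)$ leads to a negative quotient.) A second gap is the value $0$: the move can produce zero quotients ($-(q_{i+1}+1)=0$ when $q_{i+1}=-1$, or $q_i-1=0$), and $0$ is not positive; these require a separate contraction step. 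Finally, a smaller point: your inductive call is to the pair $(b,r_1)$, while the statement you are inducting on assumes both entries positive, and $r_1$ need not be.

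The paper resolves exactly these difficulties by working front-to-back rather than back-to-front. It uses two transformations: $T_1$ as above, applied at the \emph{earliest} negative quotient (length grows by $1$, downstream quotients get negated), and $T_2$, which deletes a zero quotient by merging $q_i$ and $q_{i+2}$ into $q_i+q_{i+2}$ (length drops by $2$). Termination and the bound are obtained by induction not on $k$ but on the lexicographic pair $(n_Q,k_Q)$, where $n_Q$ measures how far from the end the earliest negative quotient (in position $>1$) sits; $T_1$ strictly decreases $n_Q$ even though it creates new negative quotients downstream, because those are strictly closer to the end. This yields the sharper estimate $l\le k+n_Q\le 2k-1$. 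So while your starting identity matches the paper's, the induction scheme and the bookkeeping you propose would need to be replaced by (or rebuilt into) something like this measure-based argument before the proof is complete.
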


\begin{proof}
Denote $T_1, T_2$ the following two transformations on the set of all division chains starting from $(a,b)$:
$$
T_1: \divchainab{ccc}{q_1 & \ldots & q_k}{r_1 & \ldots & r_k} \mapsto\hfill$$
$$\divchainab{ccccccccc}{q_1 & \ldots & q_{i-1} & q_i-1 & 1 & -(q_{i+1}+1) & -q_{i+2} & \ldots & -q_k}{r_1 & \ldots & r_{i-1} & r_i+r_{i-1} & -r_i & (-1)^2r_{i+1} & (-1)^3r_{i+2} & \ldots & \pm r_k}
$$
\smallskip
where $i$ is the first index such that $q_{i+1}<0$ ($T_1$ is identity if there is no such $i$) and $\pm$ stands for $(-1)^{k-i+1}$;

$$
T_2: \divchainab{ccc}{q_1 & \ldots & q_k}{r_1 & \ldots & r_k} \mapsto \divchainab{ccccccc}{q_1 & \ldots & q_{i-1} & q_i + q_{i+2} & q_{i+3} & \ldots & q_k}{r_1 & \ldots & r_{i-1} & r_{i+2} & r_{i+3} & \ldots & r_k}
$$
where $i$ is the first index such that $q_{i+1}=0$ ($T_2$ is identity if there is no such $i$).

We will show a little bit more than stated---instead of $l\leq 2k-1$, we prove even that $l\leq k + n$ where $n=\max\set{k-i+1}{i>1 \lazar q_i<0}$ ($n=0$ if there is no such $i$). Put $Q = \divchainab{ccc}{q_1 & \ldots & q_k}{r_1 & \ldots & r_k}$ and denote the corresponding pair $(n,k)$ as $p_Q=(n_Q,k_Q)$. We prove the statement by induction on the pairs $(n_Q,k_Q)$ with lexicographic ordering. The case $p_Q = (0,1)$ is trivial.

If there is $i$ such that $q_{i+1}=0$, we get $p_{T_2(Q)}\leq_{lex}(n_Q,k_Q-2)$, and the induction assumption gives some $Q'$. It is easy to verify that this $Q'$ meets all the requirements. \chge{(Note that in the case $i+1=k$ we get $T_2(Q)=\divchainab{ccc}{q_1 & \ldots & q_{i-1}}{r_1 & \ldots & r_{i-1}}$ and $r_{i-1}=r_{i+1}$.)}

Otherwise we have $q_i \neq 0$ whenever $i>1$, and using $T_1$ we get $p_{T_1(Q)}\leq_{lex} (n_Q-1,k_Q+1)$. Again, the $Q'$ given by the induction assumption is what we wanted.
\end{proof}

\begin{lem} \label{lem:dve}
Let $\divchainab{ccc}{q_1 & \ldots & q_k}{r_1 & \ldots & r_k}$ be a division chain starting from $(a,b)$ such that $a,b,q_i>0$ for $i>1$, and let $\divchainab{ccc}{e_1 & \ldots & e_m}{f_1 & \ldots & 0}$ be the quasi-Euclidean division chain in $R_\tau$ starting from $(a,b)$. Assume $m\geq k$. 

Then $|r_k|\geq f_{k+1}$, and in particular $\deg(r_k)\geq\deg(f_{k+1})$ (we put $f_{k+1}=0$ if $m=k$).
\end{lem}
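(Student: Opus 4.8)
The plan is to compare, stage by stage, an arbitrary division chain starting from $(a,b)$ (with the positivity hypotheses coming out of Lemma~\ref{lem:jedna}) against the canonical quasi-Euclidean chain, and show that the remainders of the latter are, in absolute value, the smallest possible among \emph{all} chains. Concretely, I would prove by induction on $i$ (for $0\le i\le k$) the auxiliary claim: for \emph{every} division chain $\divchainab{ccc}{q_1 & \ldots & q_k}{r_1 & \ldots & r_k}$ starting from $(a,b)$ with $a,b,q_j>0$ for $j>1$ and with $r_j\neq 0$ for $j<i$, one has $|r_j|\ge f_{j+1}$ for all $j\le i$, where $(f_1,\dots,0)$ are the remainders of the quasi-Euclidean chain. (Recall $r_0=f_0=b$, and for $j=0$ the claim is just $b\ge f_1$, which holds since $f_1=a\bmod b<b$.)

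For the induction step I would pass from the pair $(r_{i-1},r_i)$ to $(r_i,r_{i+1})$ via $r_{i+1}=r_{i-1}-q_{i+1}r_i$, and compare with $f_{i+1}=r_{i-1}'\bmod r_i'$ where $r_{i-1}'=f_i$, $r_i'=f_{i+1}$ are the quasi-Euclidean data. The key ring-theoretic input is the division-with-remainder statement of Lemma~\ref{lem:aux}: in $R^+$ every nonzero $r$ yields, for any $q\in R^+$, a \emph{unique} $(p,s)$ with $q=pr+s$, $0\le s<r$; equivalently, $q\bmod r$ is the unique element of $R^+$ in the coset $q+rR$ lying in $[0,r)$, so any other representation $q=p'r+s'$ with $p',s'\in R$ has $|s'|\ge q\bmod r$ once $s'\neq q\bmod r$ — more precisely $|s'|\ge r-(q\bmod r)\ge$ something one must track, but the clean fact we need is that among all $R$-linear combinations $r_{i-1}-\lambda r_i$ the one of \emph{smallest} absolute value is exactly $\pm f_{i+1}$ when $r_{i-1},r_i$ themselves are replaced by the quasi-Euclidean $f_{i-1},f_i$. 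This monotonicity must be propagated: from $|r_{i-1}|\ge f_i$ and $|r_i|\ge f_{i+1}$ (induction hypothesis) and $r_{i+1}=r_{i-1}-q_{i+1}r_i$, deduce $|r_{i+1}|\ge f_{i+2}$.

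The delicate point — and I expect this to be the main obstacle — is that reducing modulo a \emph{larger} divisor need not produce a larger remainder, so a naive "bigger inputs give bigger outputs" argument fails; one has to use the specific structure of the quasi-Euclidean chain, namely that $f_{i+1}=f_{i-1}\bmod f_i$ is the genuine minimum of $|f_{i-1}-\lambda f_i|$ over $\lambda\in R$, together with the fact that $f_i\mid r_j$-type divisibility relations hold along the Euclidean chain (the $f$'s form a descending divisibility chain with $f_m=\gcd(a,b)$). I would therefore set up the induction to carry the \emph{pair} of inequalities $|r_{i-1}|\ge f_i$, $|r_i|\ge f_{i+1}$ simultaneously, and in the step use: (i) $r_{i+1}$ lies in the coset $r_{i-1}+r_iR$, hence also (via the induction hypotheses and the tower of divisibilities $f_{i+1}\mid f_i\mid f_{i-1}\mid\cdots$) in a coset of $f_{i+1}R$ that is compatible with $f_{i+2}=f_i\bmod f_{i+1}$; (ii) uniqueness/minimality from Lemma~\ref{lem:aux} forces $|r_{i+1}|\ge f_{i+2}$. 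If $r_i=0$ for some $i\le k$ then the hypothesis $m\ge k$ forces $f_{i+1}=0$ (the quasi-Euclidean chain has already terminated by then, using minimality of its length, which is exactly the consequence of Theorem~\ref{thm:quasi} quoted before that theorem), so the inequality is trivial and we stop.

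Finally, the "in particular" clause about degrees is immediate: for polynomials in $\mathbb Q[x]$, $|r_k|\ge f_{k+1}\ge 0$ as elements of the discretely ordered ring $R_\tau$, and the ordering on $R_\tau$ (inherited from $\mathcal M^\pm$) refines comparison of degrees — an element of larger degree is larger in absolute value — so $|r_k|\ge f_{k+1}$ yields $\deg(r_k)\ge\deg(f_{k+1})$, with the convention $f_{k+1}=0$, $\deg 0=-1$ when $m=k$. I would close by remarking that the whole argument only uses the ordered-ring structure of $R_\tau$ and Lemma~\ref{lem:aux}, not any further property of $\tau$.
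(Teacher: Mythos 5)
Your plan aims at a true statement (indeed $|r_j|\ge f_{j+1}$ holds for every $j\le k$), but the induction as you set it up cannot close, and the facts you invoke to repair it are false or circular. The step you propose — from $|r_{i-1}|\ge f_i$, $|r_i|\ge f_{i+1}$ and $r_{i+1}=r_{i-1}-q_{i+1}r_i$ with $q_{i+1}>0$, deduce $|r_{i+1}|\ge f_{i+2}$ — does not follow from these data alone: with $f_i=5$, $f_{i+1}=3$, $f_{i+2}=2$ (the quasi-Euclidean remainders of $(13,8)$ in $\mathbb Z$), the values $r_{i-1}=6$, $r_i=3$, $q_{i+1}=2$ satisfy all the hypotheses yet give $r_{i+1}=0$. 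So the induction must carry positional information about how the chain sits relative to the quasi-Euclidean one, not just the two inequalities. Moreover, both structural facts you lean on to fix this are wrong: the $f_i$ do \emph{not} form a divisibility tower ($f_{i+1}\nmid f_i$ in general; only the last nonzero remainder divides all of them, and it equals $f_{m-1}$, not $f_m=0$), and $f_{i+1}=f_{i-1}\bmod f_i$ is \emph{not} the element of smallest absolute value in the coset $f_{i-1}+f_iR$ (for $5$ and $3$ the residue is $2$, while $5-2\cdot 3=-1$ is smaller in absolute value). This last point is exactly why the hypothesis $q_i>0$ for $i>1$ is needed and why the conclusion is $|r_k|\ge f_{k+1}$ rather than $|r_k|\ge f_k$. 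Finally, your treatment of an early zero remainder appeals to "minimality of the length" of the quasi-Euclidean chain; the remark before Theorem~\ref{thm:quasi} only gives that this chain terminates, not that it is shortest among chains with positive quotients — that minimality is essentially the lemma being proved, so this step is circular.

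The missing idea is the one the paper's proof is built on: let $l$ be the least index with $q_l\neq e_l$ (if none exists, the claim is immediate since $(f_i)$ is decreasing); up to that point the two chains coincide, so $r_{l-1}=f_{l-1}$. From the first deviation onward, the positivity of the quotients forces the remainders $r_j$ to alternate in sign and to stay at least $r_{l-1}=f_{l-1}$ in absolute value, except for at most one step near the deviation where one still gets $|r_{l+1}|=f_l$ or $|r_l|\ge f_{l+1}$; since $(f_i)$ is decreasing and $l\le k$, this yields $|r_k|\ge f_{k+1}$ in every case. It is this sign-alternation/growth phenomenon — over-shooting once buys a small negative remainder, but positive quotients then make the absolute values grow again — and not any coset-minimality of the quasi-Euclidean remainders, that makes the bound work. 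Your final observation on degrees (the ordering of $R_\tau$ refines comparison of degrees) is fine once the inequality $|r_k|\ge f_{k+1}$ is actually established.
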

\begin{proof}
Take the least $l$ such that $q_l\neq e_l$ (if there is no such, we are done since $(f_i)$ is decreasing). By an inductive argument, it is easy to observe that the following holds (\chge{recall that} we put $f_0 = r_0 = b$):
\medskip
\\
If $q_l<e_l$ then 
$\left\{
\begin{array}{l}
r_{l+2i}\geq r_{l-1}\ \mathrm{for}\ i\geq 0, \\
r_{l+2i+1}\leq -r_{l-1}\ \mathrm{for}\ i\geq 1, \\
r_{l+1}\leq -r_{l-1} \ \mathrm{or}\ r_{l+1}=-f_l;
\end{array}\right.
$
\medskip
\\ and if $q_l>e_l$ then 
$\left\{
\begin{array}{l}
r_{l+2i} < - r_{l-1}\ \mathrm{for}\ i\geq 1, \\
r_{l+2i+1} > r_{l-1}\ \mathrm{for}\ i\geq 0, \\
r_{l}\leq -r_{l-1}\ \mathrm{or}\ (m>k \;\&\; r_{l}\leq-f_{l+1}).
\end{array}\right.
$
\medskip

\noindent The statement follows since $r_{l-1}=f_{l-1}$ and $(f_i)$ is decreasing.
\end{proof}



Combining both lemmas together, we obtain the following corollary which gives us a bound on the speed of decrease of remainders in a division chain, \chge{compared to} the quasi-Euclidean one. By letting $a,b$ be any two consecutive Fibonacci numbers, one can see that the bound is optimal.

\begin{cor}
\label{cor:twotogether}
Given $a,b>0$, let $\divchainab{ccc}{e_1 & \ldots & e_n}{f_1 & \ldots & 0}$ be the quasi-Euclidean division chain starting from $(a,b)$, and $\divchainab{ccc}{q_1 & \ldots & q_k}{r_1 & \ldots & r_k}$ be an arbitrary division chain. Then, for $l\leq \min(k,n/2)$, we have $|r_l|\geq f_{2l}$.
\end{cor}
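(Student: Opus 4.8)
The plan is to derive Corollary~\ref{cor:twotogether} by stringing together Lemma~\ref{lem:jedna} and Lemma~\ref{lem:dve}. Given the arbitrary division chain $\divchainab{ccc}{q_1 & \ldots & q_k}{r_1 & \ldots & r_k}$ starting from $(a,b)$, I would first apply Lemma~\ref{lem:jedna} — not to the whole chain, but to its truncation at stage $l$, where $l\leq\min(k,n/2)$ is fixed. That truncation is a division chain of length $l$ with last remainder $r_l$, so Lemma~\ref{lem:jedna} produces a division chain $Q'$ of length $\ell\leq 2l-1$ with all quotients after the first positive and with $|r'_\ell| = |r_l|$.

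Next I would feed $Q'$ into Lemma~\ref{lem:dve}, taking as the comparison chain the quasi-Euclidean chain $\divchainab{ccc}{e_1 & \ldots & e_n}{f_1 & \ldots & 0}$ starting from $(a,b)$. To apply Lemma~\ref{lem:dve} I need $n\geq\ell$; since $\ell\leq 2l-1 < 2l \leq n$ (using $l\leq n/2$), this hypothesis is met. The conclusion of Lemma~\ref{lem:dve} then gives $|r'_\ell|\geq f_{\ell+1}$. Combining, $|r_l| = |r'_\ell| \geq f_{\ell+1}$. Finally, since $(f_i)$ is a decreasing sequence and $\ell+1\leq 2l-1+1 = 2l$, we get $f_{\ell+1}\geq f_{2l}$, whence $|r_l|\geq f_{2l}$, which is exactly the claim.

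The one point requiring a little care is the edge behaviour of the indices. If the quasi-Euclidean chain terminates early, i.e. $m<\ell$ in the notation of Lemma~\ref{lem:dve}, then $f_{\ell+1}$ is to be read as $0$ and the inequality $|r_l|\geq 0$ is vacuous but still true; likewise if $f_{2l}=0$ the statement is trivial, so one may assume $2l\leq n$, which is guaranteed by the hypothesis $l\leq n/2$. The case $r_l=0$ is also immediate. Otherwise the chain of inequalities above is genuine and we are done. I expect essentially no obstacle here: the corollary is a bookkeeping combination of the two lemmas, the only thing to watch being that one applies Lemma~\ref{lem:jedna} to the length-$l$ truncation of the given chain rather than to the full chain, so that the bound $2l-1$ (and hence $2l$) is what appears, matching $f_{2l}$. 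The remark about consecutive Fibonacci numbers showing optimality would be left as the stated observation, since for $a,b$ consecutive Fibonacci numbers the quasi-Euclidean chain is the ordinary (subtractive) Euclidean algorithm and any ``shortcut'' chain cannot do better than halving the number of steps.
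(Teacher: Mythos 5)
Your argument is correct and is exactly the combination the paper intends (the corollary is stated as an immediate consequence of Lemmas~\ref{lem:jedna} and~\ref{lem:dve}): truncate the given chain at stage $l$, normalize it by Lemma~\ref{lem:jedna} to a chain of length $\ell\leq 2l-1$ with positive quotients past the first, apply Lemma~\ref{lem:dve} (legitimate since $\ell<2l\leq n$), and use that $(f_i)$ is decreasing to pass from $f_{\ell+1}$ to $f_{2l}$. The index bookkeeping, including the truncation step that makes the bound $2l$ rather than $2k$ appear, is handled correctly.
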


Now, we have all the tools for proving that no $R_\tau$ is $k$-stage Euclidean domain, independently of the choice of $k>0$. For the sake of better readability, we state the key step of the proof as a separate lemma.

\begin{lem}
\label{lem:klengthchain}
Let $k$ be a positive integer and $0<b\in R_\tau$ such that $\deg(b)\geq 1$. Then there is $0<a\in R_\tau$ such that every division chain $\divchainab{ccc}{q_1 & \ldots & \chge{q_l}}{r_1 & \ldots & \chge{r_l}}$ of length \chge{$l\leq k$} starting from $(a,b)$ satisfies $\deg(\chge{r_l})\geq\deg(b)$.
\end{lem}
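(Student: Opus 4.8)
The plan is to reduce the statement to a single property of the quasi-Euclidean division chain and then invoke Corollary~\ref{cor:twotogether}. Precisely, it is enough to find $0<a\in R_\tau$ such that the quasi-Euclidean division chain starting from $(a,b)$ has its remainders $f_0=b,f_1,\dots,f_{2k}$ all of degree $\deg(b)$. Indeed, by (the proof of) Theorem~\ref{thm:quasi} that chain is terminating, of some length $n$, and $\deg(f_{2k})=\deg(b)\ge 1$ forces $f_{2k}\neq 0$, so $n\ge 2k+1$. Then for an arbitrary division chain $\divchainab{ccc}{q_1 & \ldots & q_l}{r_1 & \ldots & r_l}$ of length $l\le k$ starting from $(a,b)$, Corollary~\ref{cor:twotogether} gives $|r_j|\ge f_{2j}$ for all $j\le\min(l,n/2)$; since $l\le k$ and $n\ge 2k$ imply $l\le n/2$, we may take $j=l$ and get $|r_l|\ge f_{2l}$. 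As $2l\le 2k$, this $f_{2l}$ is one of $f_0,\dots,f_{2k}$, so $\deg(r_l)=\deg(|r_l|)\ge\deg(f_{2l})=\deg(b)$, as required.

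To construct such an $a$, set $d=\deg(b)$, $\lambda=\mathrm{lc}(b)>0$, and let $F_j$ be the $j$-th Fibonacci number. The crucial observation is that whenever two consecutive quasi-Euclidean remainders have the same degree $d$, the $\mathbb Q[x]$-quotient in the next step is simply the constant ratio $\rho$ of their leading coefficients (with $\rho>1$, since the remainders strictly decrease), and then Lemma~\ref{lem:aux} — together with the fact, immediate from the definition of $R_\tau$, that the only constants in $R_\tau$ are the integers — shows that the next remainder $f$ again has degree $d$, with $\mathrm{lc}(f)$ equal to $\mathrm{lc}$ of the divisor times the fractional part of $\rho$, exactly when $\rho\notin\mathbb Z$. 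So, while the degree stays at $d$, the leading coefficients of the remainders run the usual (integer-quotient) Euclidean algorithm on a pair we may prescribe. I therefore pick $a\in R_\tau$ with $\deg(a)=d$ and $\mathrm{lc}(a)=\lambda\, F_{2k+3}/F_{2k+2}$, where the lower-degree coefficients of $a$ are chosen to be a single integer — found, exactly as in the proof of Proposition~\ref{prop:corresp}(2), by the Chinese Remainder Theorem over the finitely many primes dividing the denominator of $\mathrm{lc}(a)$ — so that $a\in R_\tau$. Since $F_{2k+3}/F_{2k+2}\in(1,2)$ we have $a>b>0$ and $f_1=a\bmod b=a-b$ with $\mathrm{lc}(f_1)=\lambda\,F_{2k+1}/F_{2k+2}$, and an induction on $i$ gives $\deg(f_i)=d$ and $\mathrm{lc}(f_i)=\lambda\,F_{2k+2-i}/F_{2k+2}$ for all $0\le i\le 2k$, the key point being that for $2\le i\le 2k$ the ratio $\rho$ arising at step $i$ is $F_{2k+4-i}/F_{2k+3-i}$, which lies strictly between $1$ and $2$ and hence is not an integer, while $0<f_i<f_{i-1}$ is clear from the strict decrease of the Fibonacci numbers in this range. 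In particular $f_0,\dots,f_{2k}$ all have degree $d=\deg(b)$.

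No step of this is really hard: the work lies in (i) the routine but somewhat lengthy verification — step by step, for $i\le 2k$ — that the case distinction in Lemma~\ref{lem:aux} does pick out the quotient $\lfloor\rho\rfloor$ and the remainder asserted above, and keeps the remainders positive and strictly decreasing, which is precisely where one uses that the Fibonacci ratios $F_{j+1}/F_j$ with $j\ge 3$ are never integers; and (ii) confirming that $R_\tau$ contains an element with the prescribed degree and leading coefficient, which is the Chinese Remainder Theorem device already used for Proposition~\ref{prop:corresp}. Fixing $\deg(a)=\deg(b)$ (rather than larger) is a small convenience: it makes the first division step $f_1=a\bmod b$ of the same form as all later ones, so the induction proceeds uniformly.
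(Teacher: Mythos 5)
Your overall strategy is essentially the paper's: reduce to the quasi-Euclidean chain via Corollary~\ref{cor:twotogether} (your handling of the factor of two, arranging that $f_0,\dots,f_{2k}$ all keep degree $\deg b$, is in fact spelled out more carefully than the paper's one-line appeal to that corollary), and then choose $a$ so that, while the degree is stuck at $\deg(b)$, the leading coefficients of the remainders run a slow integer Euclidean algorithm; you do this explicitly with Fibonacci numbers, whereas the paper takes any pair $(c,d)$ admitting no terminating $\mathbb Z$-chain of length $\leq k$, sets $a=\frac{c}{d}(b-\beta)$, and derives a contradiction by transporting the chain to $\mathbb Z$ via $r\mapsto \lc(dr)/\lc(b)$. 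The genuine gap is in your construction of $a$: an element $a=\frac{U}{M}x^d+c$ with $\lc(a)=U/M$ in lowest terms, $M>1$, and $c$ an integer (or with any integer lower-degree coefficients) in general does \emph{not} lie in $R_\tau$, and no choice of $c$ can repair this. Indeed, writing $a=(Ux^d+Mc)/M$, membership requires $\pi_{\mathrm{v}_p(M)}\bigl(U\tau_p^d+Mc\bigr)=0$ for every $p\mid M$; since $p^{\mathrm{v}_p(M)}\mid M$, the summand $Mc$ vanishes modulo $p^{\mathrm{v}_p(M)}$, so the condition reads $p^{\mathrm{v}_p(M)}\mid U\tau_p^d$, which does not involve $c$ at all and fails whenever $\tau_p$ is a unit of $\mathbb J_p$ (note $p\nmid U$) --- already for $b=x$, $M=F_{2k+2}$, and any $\tau$ with $\pi_1(\tau_p)\neq 0$ for some $p\mid F_{2k+2}$. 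The CRT freedom exploited in Proposition~\ref{prop:corresp}(2) lives in the numerator taken modulo the denominator, not in integer summands, so ``integer lower coefficients chosen by CRT'' is precisely the wrong place to put it.

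The gap is repairable, and then the rest of your argument (the Fibonacci bookkeeping through Lemma~\ref{lem:aux}, using that $R_\tau\cap\mathbb Q=\mathbb Z$) goes through. Either give the constant term the same denominator, i.e.\ take $a=\frac{Ux^d+c'}{M}$ with the integer $c'$ chosen by CRT so that $c'\equiv -U\tau_p^d \pmod{p^{\mathrm{v}_p(M)}}$ for each $p\mid M$; or, cleaner and closer to the paper, build $a$ from $b$ itself: choose an integer $0\leq\beta<F_{2k+2}$ with $F_{2k+2}\mid(b-\beta)$ in $R_\tau$ (possible since $R_\tau$ inherits division with remainder by integers from $\mathcal M^\pm$) and set $a=\frac{F_{2k+3}}{F_{2k+2}}(b-\beta)$, which lies in $R_\tau$ by construction and has exactly the degree and leading coefficient your induction needs.
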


\begin{proof}
By Corollary~\ref{cor:twotogether}, it is enough to prove the statement for the quasi-Euclidean division chain instead of an arbitrary one.

Set $a=\frac{c}{d}(b-\beta)$ where $c,d\in\N$ are such that no division chain in $\mathbb{Z}$ of length \chge{$l\leq k$} starting from $(c,d)$ is terminating (such $c,d$ exist since Corollary~\ref{cor:twotogether} holds also in $\Z$) and $0\leq\beta<d$ is such that $d|(b-\beta)$ in $R_\tau$.

For a contradiction, let the quasi-Euclidean division chain $$\divchainab{ccc}{e_1 & \ldots & \chge{e_l}}{f_1 & \ldots & \chge{f_l}}$$ starting from $(a,b)$ satisfy $\deg(\chge{f_l})<\deg(b)$. We may w.l.o.g. assume $\deg(\chge{f_{l-1}})=\deg(b)$; then we have $e_i\in\Z$ for all $i = 1,2,\dotsc ,\chge{l}$. 

Define the operation $\hat{\ }\!: R_\tau \rightarrow \mathbb{Q}$ as $\hat{r} = \lc(dr)/\lc(b)$. Easily $\hat{a},\hat{b}\in\mathbb Z$, and therefore also $\hat{f}_i\in\mathbb Z$, for all $i\neq l$. Hence, $$\divchain{\hat{a}}{\hat{b}}{cccc}{e_1 & \ldots & \chge{e_{l-1}} & \chge{e_l}}{\hat{f}_1 & \ldots & \chge{\hat{f}_{l-1}} & 0}$$ is a division chain in $\mathbb Z$ starting from $(\hat{a},\hat{b})=(c,d)$, a contradiction.

\end{proof}

\begin{thm}
Let $\tau\in\prod _{p\in\mathbb P}\mathbb J_p$ be arbitrary. Then the ring $R_\tau$ is not $k$-stage Euclidean for any positive integer $k$.
\end{thm}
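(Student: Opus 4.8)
The plan is to derive the theorem directly from Lemma~\ref{lem:klengthchain} together with the definition of $k$-stage Euclidean domain. Suppose, for contradiction, that $R_\tau$ is $k$-stage Euclidean with respect to some norm $N\colon R_\tau\to\mathbb N$. The key observation is that the notion of $k$-stage Euclidean controls the value $N(r_l)$ for \emph{some} division chain of length $l\le k$, whereas Lemma~\ref{lem:klengthchain} controls the \emph{degree} of $r_l$ for \emph{every} such chain. To bridge these, I would look for a single element $b$ of large enough $N$-value all of whose ``small-degree'' neighbours (elements of degree $<\deg b$) have strictly smaller $N$-value; then the chain promised by the $k$-stage Euclidean property would have to produce such a small-degree remainder, contradicting the lemma.

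More concretely, here are the steps I would carry out. First, since $N$ takes values in $\mathbb N$, fix any $b_0\in R_\tau$ with $\deg(b_0)\ge 1$, and among all $b\in R_\tau$ with $\deg(b)\ge 1$ and $N(b)\le N(b_0)$ choose one, call it $b$, with $N(b)$ minimal — possible because $\mathbb N$ is well-ordered and the set is nonempty. Actually, the cleanest route is: let $b\in R_\tau$ realise the minimum of $N$ over the (nonempty) set $\{\,r\in R_\tau : \deg(r)\ge 1\,\}$; such a $b$ exists since $N$ is $\mathbb N$-valued. Then every $r\in R_\tau$ with $N(r)<N(b)$ has $\deg(r)\le 0$, i.e. $\deg(r)<\deg(b)$ whenever $\deg(b)\ge 1$ — but I must be careful that $r=0$ is allowed, which is fine since $\deg 0=-1<\deg b$. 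Second, apply Lemma~\ref{lem:klengthchain} to this $b$ and the given $k$ to obtain $a>0$ in $R_\tau$ such that every division chain of length $l\le k$ starting from $(a,b)$ has $\deg(r_l)\ge\deg(b)$. Third, invoke the $k$-stage Euclidean hypothesis for the pair $(a,b)$ (note $b\neq 0$ since $\deg b\ge 1$): there is a positive integer $l\le k$ and an $l$-stage division chain from $(a,b)$ with $N(r_l)<N(b)$. By the choice of $b$, this forces $\deg(r_l)<\deg(b)$, contradicting the conclusion of Lemma~\ref{lem:klengthchain}.

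One technical point to handle carefully is sign/normalisation: Lemma~\ref{lem:klengthchain} and the corollaries behind it are stated for pairs of \emph{positive} elements of $R_\tau$ (with respect to the inherited discrete order), while a general norm $N$ and a general division chain need not respect positivity. However, this is harmless: replacing $a,b$ by $|a|,|b|$ changes the remainders only by sign, hence not their degrees, and $N$ may be assumed (or the argument phrased) so that only $\deg$ matters for the contradiction. I would also note explicitly that the element $b$ produced has $\deg(b)\ge 1$, so that ``$\deg(r_l)<\deg(b)$'' is a genuine strengthening over ``$r_l$ has degree $\le 0$''; this is why we cannot take $b$ of degree $0$.

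I do not anticipate a serious obstacle here — the real work has already been done in Corollary~\ref{cor:twotogether} and Lemma~\ref{lem:klengthchain}, which together say that degree cannot drop in $k$ steps below $\deg b$ for a cleverly chosen $a$. The only mild subtlety, and the step I would be most careful about, is the extraction of the ``degree-minimal'' witness $b$: one must ensure the set $\{r\in R_\tau:\deg r\ge 1\}$ is nonempty (it contains $x\cdot n$ for a suitable integer $n$, e.g. $2x$ if $x\notin R_\tau$, or $x$ itself otherwise — in any case $R_\tau\supseteq\mathbb Z[x]$ up to the isomorphism, so $x$ or a multiple lies in it) and that minimality of $N(b)$ over this set really does force all strictly-$N$-smaller elements to have degree $<\deg b$. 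Once that is in place, the contradiction with Lemma~\ref{lem:klengthchain} is immediate.
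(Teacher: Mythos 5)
Your proposal is correct and follows essentially the paper's own route: both arguments rest entirely on Lemma~\ref{lem:klengthchain} together with the well-foundedness of an $\mathbb{N}$-valued norm, the only difference being that you take a minimal-$N$ witness $b$ among elements of degree $\geq 1$ (a minimal-counterexample formulation), whereas the paper runs the same descent explicitly, building an infinite strictly $N$-decreasing sequence $b_0=x, b_1, b_2,\dotsc$ with non-decreasing degrees. Your handling of the sign issue is also the paper's (apply the lemma to $|b|$ and negate the chain, observing that only degrees enter the contradiction once minimality of $N(b)$ has been used), so no gap remains.
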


\begin{proof}
Assume the contrary and let $N$ be a norm such that $R_\tau$ is $k$-stage Euclidean with respect to $N$. To get a contradiction, we construct an infinite sequence $(b_0,b_1,\ldots)$ of elements from $R_\tau$ with $N(b_i)>N(b_{i+1})$ and $\deg{b_{i+1}\geq \deg{b_i}\geq 1}$, for all $i\in\N$.

As the first step, put $b_0=x\in R_\tau$. Now assume we have defined $b_i$ for all $i\leq j\in\N$. Suppose $b_j>0$. For $b_j$ we find some $a_j$ using Lemma~\ref{lem:klengthchain}. By the $k$-stage Euclidean property, there is an \chge{$l$-stage} division chain $\divchain{a_j}{b_j}{ccc}{q_1 & \ldots & \chge{q_l}}{r_1 & \ldots & \chge{r_l}}$ \chge{with $l\leq k$} starting from the pair $(a_j,b_j)$ such that $N(\chge{r_l})<N(b_j)$. So we can set $b_{j+1}=\chge{r_l}$. By Lemma~\ref{lem:klengthchain}, we know that $\deg b_{j+1}\geq \deg{b_j} \geq 1$. 

The case $b_j<0$ is similar. For $-b_j$ find $-a_j$ by Lemma \ref{lem:klengthchain}, take a division chain $\divchain{a_j}{b_j}{ccc}{q_1 & \ldots & \chge{q_l}}{r_1 & \ldots & \chge{r_l}}$ with $N(\chge{r_l})<N(b_j)$ and set $b_{j+1}=\chge{r_l}$. If $\deg \chge{r_l} < \deg b_j$, we would have the division chain $\divchain{-a_j}{-b_j}{ccc}{q_1 & \ldots & \chge{q_l}}{-r_1 & \ldots & \chge{-r_l}}$ with $\deg \chge{-r_l} < \deg -b_j$, contradicting the choice of $-a_j$.
\end{proof}
\bigskip
We conclude our paper by the following
\subsection*{Open question}

Is there an example of a $k$-stage Euclidean domain which is not $(k-1)$-stage Euclidean, for $k>2$?


\end{document}